\newtheorem{definition}{Definition}
\newtheorem{theorem}{Theorem}
\newtheorem{corollary}{Corollary}
\newtheorem{proposition}{Proposition}
\newtheorem{remark}{Remark}
\newtheorem{example}{Example}
\newcommand{\continuation}{??}
\newenvironment{continueexample}[1]
 {\renewcommand{\continuation}{\ref{#1}}\excont[continued]}
 {\endexcont}
\newtheorem{problem}{Problem}
\newtheorem{assumption}{Assumption}
 \global\long\def\G{\mathcal{G}}
 \global\long\def\E{\mathcal{E}}
 \global\long\def\V{\mathcal{V}}
 \global\long\def\aut{\mathrm{Aut}}
 \global\long\def\GG{\Gamma} 
\newif\ifimportant
\begin{document}
\title{Forced Symmetric Formation Control}
\author{Daniel Zelazo,~\IEEEmembership{Senior~Member,~IEEE,} Shin-ichi Tanigawa, and Bernd Schulze \thanks{D. Zelazo is with the Faculty of Aerospace Engineering, Technion-Israel Institute of Technology, Haifa 3200003, Israel. 
S. Tanigawa is with the Graduate School of Information Science and Technology at the University of Tokyo. B. Schulze is with the Department of Mathematics and Statistics at Lancaster University. D. Zelazo was supported by the Israel Science Foundation grant no. 453/24 and the Bernard M. Gordon Center for Systems Engineering at the Technion. S.Tanigawa was partially supported by JST PRESTO Grant Number JPMJPR2126.}
}

\maketitle

\begin{abstract}
This work considers the distance constrained formation control problem with an additional constraint requiring that the formation exhibits a specified spatial symmetry.  We employ recent results from the theory of symmetry-forced rigidity to construct an appropriate potential function that leads to a gradient dynamical system driving the agents to the desired formation.  We  show that only $(1+1/|\Gamma|)n$ edges are sufficient to implement the control strategy when there are $n$ agents and the underlying symmetry group is $\Gamma$. This number is considerably smaller than what is typically required from classic rigidity-theory based strategies ($2n-3$ edges). We also provide an augmented control strategy that ensures the agents can converge to a formation with respect to an arbitrary centroid. Numerous numerical examples are  provided to illustrate the main results.
\end{abstract}

\begin{IEEEkeywords}
 formation control, rigidity theory, symmetry
\end{IEEEkeywords}

\section{Introduction}
\label{sec:introduction}

Many applications for cooperative multi-agent networks require the agents to arrange themselves into some spatial pattern.  This can include alignment of orientations and velocities for flocking behaviors \cite{Igarashi:TAC2009}, or specific formations like spacecraft constellations for sensing \cite{Rosen2000}  or vehicle platoons for autonomous driving \cite{Dai2018}.  
One of the main challenges for the implementation of these applications is to resolve the trade-off between sparsity of information exchange with guarantees on the system performance. In the study of formation control problems, this trade-off is well understood through the lens of \emph{rigidity theory}.  

Rigidity theory studies the solution of a set of geometric constraints on a discrete configuration of points in an Euclidean space.  These constraints can include distance or bearing constraints between pairs of points.  Of interest in rigidity theory is to determine whether the set of polynomial equations representing these constraints (i) has a solution (independence); (ii) has locally isolated solutions (rigidity); or (iii) has exactly one solution in the given space up to isometric motions (global rigidity) \cite{AR, Jackson2007NotesOT}.  

In this paper we will focus on configurations that are constrained by pairwise distances. Such systems are commonly known as (bar-joint) frameworks.  Since checking a  framework for rigidity  is in general very difficult (as it requires solving a system of quadratic equations), a common approach is to check for the linearized (and stronger) notion of rigidity known as ''infinitesimal rigidity" (Section~\ref{sec.rigidity}).

The seminal work from \cite{Krick2009} provided the first formal result showing that (minimal) infinitesimal rigidity (MIR) of the interaction network in a team of integrator agents is required to ensure that  the gradient controller (locally) converges to the correct formation shape.  In the Euclidean space $\mathbb{R}^2$, MIR translates to having $2n-3$ constraints, where $n$ is the number of agents.
Since the work of \cite{Krick2009}, there has been an explosion of research focusing on the formation control problem from a rigidity theory perspective; see the following for an overview \cite{formationbook2, formationbook, Zhao2019}.  Within the controls community, the main concern of these works, however, focuses on understanding the resulting dynamics of the control strategies, with most efforts on the stability and convergence properties of these systems \cite{Sun2016, Dimarogonas2008, Dorfler2010}.  In these works, the assumption of infinitesimal rigidity is taken as a kind of architectural requirement for solving the formation control problem.  That is, there has not been a concerted effort to exploit results from rigidity theory to relax the infinitesimal rigidity assumption. In this paper, we will pursue this using recent advances in the rigidity analysis of symmetric frameworks.

It is well understood in the rigidity community that there are many special configurations that can lead to unexpected flexibility (or rigidity). Symmetry often leads to such special configurations. 
An example in the plane is shown in Fig.~\ref{flex_ex}. 
The  graph  is infinitesimally rigid (in fact, MIR) for almost all realisations as a framework in the plane. See the framework in Fig.~\ref{flex_ex}(a), for example. However, if we place the vertices in the positions shown in Fig.~\ref{flex_ex}(b), then the framework has three reflection symmetries (each of the reflections in the dashed mirror lines maps the framework onto itself) and it is  continuously flexible, as indicated in Fig.~\ref{flex_ex}(c). 
%
%

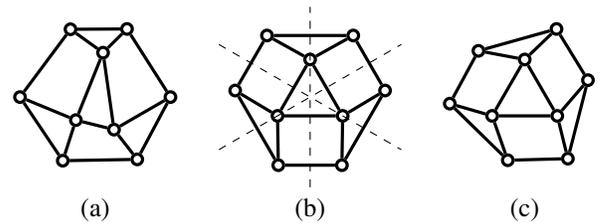
\begin{figure}[!h]
\begin{center}
\begin{tikzpicture}[very thick,scale=1]
\tikzstyle{every node}=[circle, draw=black, fill=white, inner sep=0pt, minimum width=4pt];
\path (80:0.6cm) node (p1)  {} ;
    \path (230:0.4cm) node (p2)  {} ;
    \path (300:0.5cm) node (p3)  {} ;

    \path (65:1cm) node (p4) {} ;
    \path (110:0.96cm) node (p5) {} ;
    \path (180:1cm) node (p6) {} ;
    \path (243:0.95cm) node (p7) {} ;
    \path (304:1cm) node (p8) {} ;
    \path (0:1cm) node (p9) {} ;

    \draw(p1)--(p2);
\draw(p3)--(p2);
\draw(p1)--(p3);

    \draw(p4)--(p5);
\draw(p5)--(p6);
\draw(p6)--(p7);
    \draw(p7)--(p8);
\draw(p8)--(p9);
\draw(p9)--(p4);

    \draw(p1)--(p4);
\draw(p1)--(p5);
\draw(p2)--(p6);

    \draw(p2)--(p7);
\draw(p3)--(p8);
\draw(p9)--(p3);

\node [rectangle, draw=white, fill=white] (b) at (0,-1.5) {(a)};
\end{tikzpicture}
\hspace{0.3cm}
\begin{tikzpicture}[very thick,scale=1]
\tikzstyle{every node}=[circle, draw=black, fill=white, inner sep=0pt, minimum width=4pt];
\path (90:0.5cm) node (p1)  {} ;
    \path (210:0.5cm) node (p2)  {} ;
    \path (330:0.5cm) node (p3)  {} ;

    \path (55:1cm) node (p4) {} ;
    \path (125:1cm) node (p5) {} ;
    \path (175:1cm) node (p6) {} ;
    \path (245:1cm) node (p7) {} ;
    \path (295:1cm) node (p8) {} ;
    \path (5:1cm) node (p9) {} ;

    \draw(p1)--(p2);
\draw(p3)--(p2);
\draw(p1)--(p3);

    \draw(p4)--(p5);
\draw(p5)--(p6);
\draw(p6)--(p7);
    \draw(p7)--(p8);
\draw(p8)--(p9);
\draw(p9)--(p4);

    \draw(p1)--(p4);
\draw(p1)--(p5);
\draw(p2)--(p6);

    \draw(p2)--(p7);
\draw(p3)--(p8);
\draw(p9)--(p3);
\draw[thin, dashed](0,-1.2)--(0,1.2);
\draw[thin, dashed](210:1.4)--(30:1.4);
\draw[thin, dashed](150:1.4)--(-30:1.4);

\node [rectangle, draw=white, fill=white] (b) at (0,-1.5) {(b)};
\end{tikzpicture}
\hspace{0.3cm}
\begin{tikzpicture}[very thick,scale=1]
\tikzstyle{every node}=[circle, draw=black, fill=white, inner sep=0pt, minimum width=4pt];
\path (90:0.5cm) node (p1)  {} ;
    \path (210:0.5cm) node (p2)  {} ;
    \path (330:0.5cm) node (p3)  {} ;

    \path (65:1cm) node (p4) {} ;
    \path (135:0.87cm) node (p5) {} ;
    \path (185:1cm) node (p6) {} ;
    \path (255:0.87cm) node (p7) {} ;
    \path (305:1cm) node (p8) {} ;
    \path (15:0.87cm) node (p9) {} ;

    \draw(p1)--(p2);
\draw(p3)--(p2);
\draw(p1)--(p3);

    \draw(p4)--(p5);
\draw(p5)--(p6);
\draw(p6)--(p7);
    \draw(p7)--(p8);
\draw(p8)--(p9);
\draw(p9)--(p4);

    \draw(p1)--(p4);
\draw(p1)--(p5);
\draw(p2)--(p6);

    \draw(p2)--(p7);
\draw(p3)--(p8);
\draw(p9)--(p3);

\node [rectangle, draw=white, fill=white] (b) at (0,-1.5) {(c)};
\end{tikzpicture}
                  \end{center}
\vspace{-0.3cm}
\caption{
The  framework in (a) is  infinitesimally rigid, whereas the  framework with dihedral symmetry in (b) is  flexible, as shown in (c). }
\label{flex_ex}
\end{figure}

Note that the motion of the framework in (b) destroys all the reflection symmetries.  So while the framework is  flexible, it is still ''forced-symmetric  rigid", in the sense that it does not have a  non-trivial  motion that preserves the original symmetry of the framework. We will make this precise in Section~\ref{sec:symmetry_fwks}. 


In our previous work \cite{Zelazo_IFAC2023}, we proposed a ``symmetry-attaining" formation controller by augmenting the classic gradient-based formation control with another control term that forces the agents into a symmetric position. Under some mild assumptions of the underlying graph, we showed that this approach does not require more communication or sensing than the normal formation controller and can guarantee local convergence to the desired symmetry-constrained formation. Moreover, by means of simulation we demonstrated that the problem can be solved even for flexible frameworks requiring less than $2n-3$ edges as is typically assumed for the formation control problem in MIR frameworks.


This initial work, however, did not leverage the full potential of recent results characterizing symmetry-forced rigidity.  In the present article, we extend the results of \cite{Zelazo_IFAC2023} in the following 
ways. We develop a concrete mathematical foundation for solving the multi-agent formation control problem under symmetry constraints. The theory provides the symmetric counterpart to the ordinary rigidity-based formation control using the modern language of graph rigidity, and in particular that of forced-symmetric rigidity theory. Notably, we show  that $(1+1/|\Gamma|)n$ edges are sufficient when the underlying symmetry group is $\Gamma$. This is significantly smaller than the bound for infinitesimal rigidity.  \textcolor{black}{This improved bound has direct implications for practical implementation as it leads to a reduction of energy consumption and communication bandwidth.  Moreover, this approach can be used to augment any multi-agent coordination problem with explicit symmetry constraints, providing a new conceptual solution to these problems.}
In this direction, we first present a detailed overview of results from the rigidity theory of symmetric frameworks.  The central objects in this study are the \emph{quotient graphs}, providing a graph-theoretic characterization of the vertex and edge orbits induced by a given symmetry group, and the \emph{orbit rigidity matrix}, which can be thought of as the rigidity matrix associated to the quotient graph of a framework. The orbit rigidity matrix is then used to define an \emph{orbit rigidity formation potential} for solving the forced-symmetric formation control problem.  We then provide a stability and convergence analysis of this control law.  Finally, we propose a consensus-augmented version of the forced-symmetric formation control problem that ensures the formation converges to a point other than a globally defined origin.  Numerous examples are provided throughout to illustrate the main concepts and results.





This work is organised as follows.  Section \ref{sec.prelim} presents an overview of geometric rigidity theory and formation control. Section \ref{sec.symmetry} provides a detailed background on notions of symmetry for graphs and frameworks. Section \ref{sec.symform} presents the main results of the paper with different formation control strategies that exploit symmetry properties of the framework.  

\section{Preliminaries from geometric rigidity\\ and Formation Control}\label{sec.prelim}

This section provides an overview of basic concepts from geometric rigidity theory and the formation control literature.  

\subsection{Rigidity Theory}\label{sec.rigidity}
A \emph{framework}  in $\mathbb{R}^d$  is defined to be a pair $(\G,p)$ consisting of a finite simple graph $\G=(\V,\E)$ and a map $p:\V\to \mathbb{R}^d$. It is natural to also consider $p$ as a point in $\mathbb{R}^{d|\V|}$, and  we refer to $p$ as a \emph{configuration} of $|\V|$ points in $\mathbb{R}^d$.   Frameworks are the fundamental objects of study in geometric rigidity theory, where they are considered as mathematical models of physical structures consisting of fixed-length bars (corresponding to the edges of $\G$) that are connected by joints (corresponding to the vertices of $\G$) that allow rotation in any direction. A framework $(\G,p)$ is \emph{rigid} if the only edge-length-preserving continuous motions of the
vertices arise from isometries of $\mathbb{R}^d$, and \emph{flexible} otherwise. The rigidity and flexibility analysis of frameworks is a well-developed theory with a rich history and many practical applications (see, e.g. \cite{bernd2017,ConnellyGuest,Wlong}). In particular, it has recently  found important applications in the formation control of multi-robot systems \cite{formationbook,formationbook2}. 

Since for $d\geq 2$, it is NP-hard to determine if a given framework is rigid, a common  approach to study the rigidity of  frameworks is to linearise the problem by differentiating the length constraints on the edges. This leads to the notion of infinitesimal (or equivalently, static) rigidity.
An \emph{infinitesimal motion} of a framework $(\G,p)$  in $\mathbb{R}^d$
is an assignment of velocity vectors, one to each vertex, $u: \V\to \mathbb{R}^{d}$, such that
$\langle p_i-p_j, u_i-u_j\rangle =0 \quad\textrm{ for all } ij \in  \E\textrm{,}$
where $p_i=p(i)$ and $u_i=u(i)$ for each $i \in \V$. An infinitesimal motion $u$ of $(\G,p)$ is \emph{trivial}
if there exists a skew-symmetric matrix $S$
and a vector $c$ such that $u_i=S p_i+c$ for all $i\in \V$, and $(\G,p)$ is \emph{infinitesimally rigid} if every infinitesimal motion of $(\G,p)$ is trivial, and \emph{infinitesimally flexible} otherwise.

The $|\E| \times d|\V|$ matrix corresponding to the linear system above is called the \emph{rigidity matrix} of $(\G,p)$, denoted as $R(\G,p)$. The row of $R(\G,p)$ corresponding to the edge $ij\in\E$ is of the form 
$\left(\begin{array}{ccccc}   0  \cdots  0 & (p_{i}-p_{j})^T & 0  \cdots  0 & (p_{j}-p_{i})^T &  0   \cdots  0 \end{array} \right).
$
We also employ the useful algebraic representation of the rigidity matrix,
\begin{align}\label{rigidity_matrix}
R(\G,p) = \mathrm{diag}\{(p_i-p_j)^T\}_{ij\in \E}(E^T\otimes I_d),
\end{align}
where $E$ is the  $|\V|\times |\E|$ incidence matrix (using an arbitary orientation of the edges) with $E_{ij}=1$ if the edge $e_j$ leaves vertex $i$, $E_{ij}=-1$ if the edge $e_j$ enters the vertex $i$, and $E_{ij}=0$ otherwise.
So the kernel of  $R(\G,p)$ is the space of all infinitesimal motions of $(\G,p)$, and it is well known that  $(\G,p)$  is infinitesimally rigid if and only if the rank of  $R(\G,p)$ is $d|\V|-\binom{d+1}{2}$, provided that the points $p_i$ affinely span all of $\mathbb{R}^d$ \cite{Wlong}. 

A \emph{self-stress} of a framework $(\G,p)$ is a function $\omega: \E\to \mathbb{R}$ so that the following equation is satisfied at every vertex $i$:
\begin{equation*}\label{eq:equilibrium_stress}
    \sum\limits_{\{j: ij\in E\}} \omega_{ij} (p_i - p_j) = 0.
\end{equation*}
Equivalently, $\omega\in \mathbb{R}^{|\E|}$ is a self-stress if and only if $\omega$ is an element of the cokernel of $R(\G,p)$, i.e. $\omega^TR(\G,p)=0$. If $(\G,p)$ has no non-zero self-stress, then it is called \emph{independent}. Moroever, an infinitesimally rigid and independent framework is called \emph{isostatic}. Isostatic frameworks are also called \emph{minimally infinitesimally rigid}, as the removal of any edge yields an infinitesimally flexible framework.

While an infinitesimally rigid framework is always rigid, the converse does not hold in general. Asimov and Roth, however, showed that for `generic' configurations $p$ (i.e., an open dense subset of configurations), infinitesimal rigidity is equivalent to rigidity \cite{AR}. The graphs that yield rigid frameworks for generic configurations in the plane have been characterized by Pollaczek-Geiringer \cite{gei} and Laman \cite{laman}, and these conditions can be checked in polynomial time. It remains a key open problem to find an efficient characterization of generically rigid graphs in higher dimensions. 

\subsection{Formation Control}\label{sec.formationcontrol}

We review the now well-studied distance-constrained formation control problem \cite{formationbook}.  Consider a network of $n=|\V|$ agents described by integrator dynamics,
\begin{align}\label{integrator}
    \dot{p}_i(t) =  u_i(t),
\end{align}
where $p_i(t) \in \mathbb{R}^d$ is the position of agent $i$, and $u_i(t)\in \mathbb{R}^d$ is the control. As in the development of rigidity theory in \S \ref{sec.rigidity}, the configuration of the network is the stack of the agent positions, $p(t) = \begin{bmatrix}p_1^T(t) &\cdots & p_n^T(t)\end{bmatrix}^T$ (similary defined for $u(t)$). The agents are tasked with attaining a spatial formation using only measurements and/or communication with neighboring agents, as defined by a graph $\G=(\V,\E)$.  The formation is specified by a set of desired inter-agent distances, $\mathrm{\bf d}_{ij}$, for each edge $ij \in \E$, and we denote $\mathrm{\bf d}$ as the stack of all desired distances.  

It is well-known that a gradient-based control strategy can (locally) solve the formation control problem.  In this direction, we define the \emph{formation potential} function,
\begin{align}\label{formation_potential}
F_f(p(t)) =\frac{1}{4} \sum_{ij \in \E}\left(\|p_i(t)-p_j(t)\|^2-\mathrm{\bf d}_{ij}^2\right)^2.
\end{align} 
Then the gradient controller $u(t) = -\nabla F_f(p(t))$ solves the formation control problem.  That is, the closed-loop system
\begin{align} \label{formation_control}
    \dot{p}(t) & = -R(\G,p(t))^T\left(R(\G,p(t))p(t)-\mathrm{\bf d}^2 \right),
\end{align}
satisfies $\underset{t\to \infty}{\lim} \|p_i(t)-p_j(t)\| = \mathrm{\bf d}_{ij}$ for all $ij\in\E$.  Here, $R(\G,p(t))$ is the rigidity matrix defined in \S\ref{sec.rigidity}.  The interested reader may refer to \cite{formationbook2, Sun2016} for more details. 

\section{Symmetry in graphs and frameworks}\label{sec.symmetry}

The main focus of this work is to exploit  notions from the rigidity theory of symmetric frameworks  to solve the formation control problem.  In this section, we provide an overview of  symmetric frameworks and their (infinitesimal) rigidity.  

\subsection{Symmetry in graphs}

Symmetry in objects can be described mathematically via the fundamental algebraic notion of a group (see e.g. \cite{Dummit2004}). 
\begin{definition}
A \emph{group} is defined to be a set, $\Gamma$, together with an operation $\circ$, such that for any two elements $a,b\in \Gamma$, the composition $a \circ b$ is also in $\Gamma$.
The operation $\circ$ satisfies the associativity law. Moreover, each group has a special element $\mathrm{id}$, called the
\emph{identity element}, such that for any element $a \in \Gamma$, $a \circ \mathrm{id}=\mathrm{id} \circ a= a$. Each element $a$ of $\Gamma$ also has an \emph{inverse} $a^{-1}$ in $\Gamma$ such
that $a\circ a^{-1} = a^{-1} \circ a = \mathrm{id}$. The number of elements in a group is called the \emph{order} of the group. A subset $B$ of $\Gamma$ that also forms a group under $\circ$ is called a \emph{subgroup} of $\Gamma$.\end{definition} 

The combinatorial symmetries of a finite simple graph $\G=(\V,\E)$ are described by its group of automorphisms. An automorphism of $\G$ can be loosely understood as a permutation of $\V$ that maps adjacent vertices to adjacent vertices, and non-adjacent vertices to nonadjacent vertices, and hence preserves all structural properties of $\G$. 

\begin{definition}
An \emph{automorphism} of the graph $\G=(\V,\E)$ is a permutation $\psi : \V \to \V$ of its vertex set such that $\psi(v)\psi(u) \in \E \Leftrightarrow vu \in \E.$
\end{definition}

 It is clear, then, that the identity permutation, denoted $\mathrm{id}$, is an automorphism of any graph, and for an automorphism $\psi$, $\psi^{-1}$ is also an automorphism. Therefore, the set of all automorphisms of $\G$ forms a group under composition of maps. This group is called the \emph{automorphism group} of $\G$ and is denoted by $\aut(\G)$.

A common way to represent a permutation for an automorphism is by a two-row array.  For a graph $\G$ with $|\V|=n$ vertices, one can write the automorphism $\psi$ as
$$\psi= \left(\begin{array}{cccc} 1 & 2 & \cdots & n \\ \psi(1) & \psi(2) & \cdots &\psi(n) \end{array} \right).$$
Equivalently, one can express every permutation more compactly as a composition of disjoint cycles of the permutation.  A cycle is a successive action of the permutation that sends a vertex back to itself, i.e., $i\to \psi(i) \to \psi(\psi(i)) \to \cdots \to \psi^k(i)=i$, where $\psi^k = \underbrace{\psi \circ \cdots \circ \psi}_{k \text{ times }}$.  {Such a cycle is compactly written using the \emph{cycle notation}, denoted by $(i\,\psi(i)\,\cdots \psi^{k-1}(i))$.} The integer $k$ is the \emph{length} of the cycle.

\begin{definition}\label{def:free}
A graph $\G$ is \emph{$\Gamma$-symmetric} for any sub-group $\Gamma \subseteq \aut(\G)$.  The symmetry is \emph{free} if $\gamma(i) \neq i$ for all $i\in \V$ and non-identity $\gamma \in \GG$.
\end{definition}

A key structural property of a $\Gamma$-symmetric graph $\G$ is its sets of vertex and edge orbits under $\Gamma$. Loosely speaking, the orbit of a vertex $i$ (or edge $e$) of $\G$ under $\Gamma$ is the set of vertices (edges, resp.) of $\G$ that $i$ ($e$, resp.) can be mapped to by elements in $\Gamma$.

\begin{definition} \label{def:orb}
For a $\GG$-symmetric graph $\G=(\V,\E)$ and vertex $i\in \V$, the set $\GG_i=\{\gamma(i) \,|\, \gamma \in \GG\}$ is called the \emph{vertex orbit} of $i$.  Similarly, for an edge $e=ij \in \E$, the set $\GG_e = \left\{\gamma(i)\gamma(j) \,|\, \gamma \in \GG\right\}$ is termed the \emph{edge orbit} of $e$.
\end{definition}
The size of the vertex orbits depends, of course, on the group $\Gamma$. 
Since all nodes in a given orbit are somehow equivalent under a group action, we often consider \emph{representative vertices} from each vertex orbit.  We denote by $\mathcal V_0$ the set of representative vertices for each orbit, such that $|\mathcal V_0|$ are the number of vertex orbits, and $i\in \mathcal{V}_0$ means that vertex $i$ is only in one vertex orbit.  Similarly, we denote by $\mathcal{E}_0$ the set of representative edges from each edge orbit.

For a $\Gamma$-symmetric graph $\G$ and a vertex orbit $\Gamma_i$ of $\G$ under $\Gamma$, it follows immediately from the definition of $\Gamma_i$ that for every vertex $j$ in $\Gamma_i$ there is a  $\gamma_j \in \Gamma$ such that $\gamma_j(j)=i$. Similarly, for any two vertices $u,v \in \Gamma_i$ there is an automorphism $\gamma_{uv} \in \Gamma$ such that $\gamma_{uv}(u)=v$. With this notation we then have $\gamma_j = \gamma_{ji}$ when $i$ is the representative vertex of $\Gamma_i$. 


\begin{example} \label{ex:aut}
Figure~\ref{fig:square_reflections} shows the cycle graph on $4$ nodes, $C_4$. We will identify all the automorphisms of $\aut(C_4)$.  First, consider a clock-wise rotation by $90^\circ$ of $C_4$ as drawn in the figure.  This gives the automorphism
$$\psi_1 =\left(\begin{array}{cccc} 1 & 2 & 3 & 4 \\ 2 & 4 & 1 &3 \end{array} \right).$$ The cycle notation is $\psi_1=(1\,2\,4\,3)$.
With $\psi_1$, we also have $\psi_2=\psi_1^2=\psi_1\circ \psi_1, \psi_3=\psi_1^3$ and $\psi_1^4=\mathrm{id}$ in $\aut(\G)$, where $\psi_2=(1\,4)(2\, 3)$ and $\psi_3=(1\,3\,4\,2)$ may be interpreted geometrically as rotations by $180^\circ$ and $270^\circ$.  Additional permutations can be found by considering reflections.  
\begin{figure}[!h]
\begin{center}
        \begin{tikzpicture}[very thick,scale=.75]
\tikzstyle{every node}=[circle, draw=black, fill=white, inner sep=0pt, minimum width=5pt];
  \draw [dashed, thin, blue] (-1.5,0) -- (1.5,0);  
        \draw [dashed, thin, red] (0,-1.5) -- (0,1.5);
       \draw [dashed, thin, brown] (-1.5,-1.5) -- (1.5,1.5);
       \draw [dashed, thin, green] (1.5,-1.5) -- (-1.5,1.5);
    \path (-0.8,-0.8) node (p3) [label = below left: $3$] {} ;
    \path (0.8,-0.8) node (p4) [label = below right: $4$] {} ;
    \path (0.8,0.8) node (p2) [label = above right: $2$] {} ;
     \path (-0.8,0.8) node (p1) [label = above left: $1$] {} ;
      \draw (p1) -- (p3);
    \draw (p3) -- (p4);
    \draw (p2) -- (p4);
    \draw (p2) -- (p1);
      
        \end{tikzpicture}
    \end{center}
\vspace{-0.3cm}
\caption{The cycle graph $C_4$ has $8$ automorphisms in $\aut(\G)$.}\label{fig:square_reflections}
\end{figure}
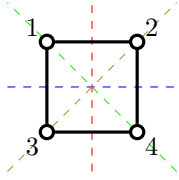
Figure \ref{fig:square_reflections} shows 4 reflection symmetries.  Consider first the reflection about the vertical red line, giving the permutation (in cycle notation) $\psi_4=(1 \, 2)(3 \, 4)$.  Similarly, the horizontal reflection (blue line) yields $\psi_5=(1\,3)(2\,4)$, the diagonal reflection (green line) gives $\psi_6=(1)(2,\, 3)(4)$, and the brown line reflection $\psi_7=(1\, 4)(2)(3)$.  Thus, we have that $\aut(C_4)=\{\mathrm{id},\psi_1,\ldots,\psi_7\}$ has 8 automorphisms. As an abstract group, it is the dihedral group $D_8$ of order $8$ \cite{Dummit2004}. Note that any vertex can be mapped to any other under the automorphisms in $\aut(C_4)$ and hence $C_4$ has only one vertex orbit (and only one edge orbit) under $\aut(C_4)$. If, however, we considered $C_4$ as a $\Gamma$-symmetric graph, where $\Gamma=\{\mathrm{id},\psi_4\}\subset \aut(C_4)$, then $C_4$ has two vertex orbits, namely $\{1,2\}$ and $\{3,4\}$, and three edge orbits, namely $\{12\}, \{34\}$ and $\{13,24\}$. 
\end{example}

\subsection{Symmetry in frameworks}\label{sec:symmetry_fwks}

Having defined notions of symmetries for graphs, we now consider symmetry of frameworks \cite{Bernd2017sym}.

\begin{definition}\label{def:tau-gamma}
Let  $\G$ be a  $\Gamma$-symmetric graph, and let $\Gamma$ be represented as a \emph{point group}, i.e., a subgroup of the orthogonal group $O(\mathbb{R}^d)$,  via a homomorphism $\tau:\Gamma\rightarrow O(\mathbb{R}^d)$. In other words, $\tau$ assigns an orthogonal matrix (describing an isometry of $\mathbb{R}^d$ such as a rotation or reflection) to each element of $\Gamma$. Then a framework $(\G,p)$ in $\mathbb{R}^d$ is called \emph{$\tau(\Gamma)$-symmetric} if 
\begin{equation}\label{eq:symfwk}
\tau(\gamma) (p_i)=p_{\gamma (i)} \quad \textrm{for all } \gamma\in \Gamma \textrm{ and all } i\in \V.
\end{equation}
\end{definition}

Given  a $\tau(\Gamma)$-symmetric framework $(\G,p)$ and a vertex orbit $\Gamma_i$, for every $j\in \Gamma_i$ there is a  $\gamma_j \in \Gamma$ such that  $\tau(\gamma_j)p_j = p_i$. More generally,  for any two vertices $u,v\in \Gamma_i$ there exists $\gamma_{uv} \in \Gamma$ such that $\tau(\gamma_{uv})p_u=p_v$.  

We use the standard Schoenflies notation for point groups in this paper \cite{alt94,atk70}. The only possible point groups in dimension $2$ are the reflection group $\mathcal{C}_s$ (consisting of the identity and a single reflection about an axis $\sigma$), the rotational groups $\mathcal{C}_n$ of order $n$, where $n\geq 1$ (generated  by a rotation $c_n$ about the origin in counter-clockwise direction by  an angle of $2\pi/n$), and the dihedral groups $\mathcal{C}_{nv}$ of order $2n$, where $n\geq 2$ (generated by a reflection $\sigma$ and a rotation $c_n$). If we think of the graph drawing in Figure~\ref{fig:square_reflections} as a framework in the plane, for example, then this framework is $\mathcal{C}_{4v}$-symmetric. 

If a framework $(\G,p)$ is $\tau(\Gamma)$-symmetric, then the configuration $p$ is in a special geometric position that may no longer be `generic.' Thus, symmetry can lead to unexpected  flexibility (as well as unexpected rigidity), as in Fig.~\ref{flex_ex}. Since symmetry is very common in both natural and man-made structures, the rigidity and flexibility analysis of symmetric frameworks has grown into a major research area over the last two decades; see \cite{Bernd2017sym}  for a summary of this work.

A fundamental result  -- based on group representation theory -- is that for a $\tau(\Gamma)$-symmetric framework $(\G,p)$, there are suitable symmetry-adapted bases of $\mathbb{R}^{|\E|}$ and $\mathbb{R}^{|d\V|}$ that transform the rigidity matrix of $(\G,p)$ into a block-decomposed form \cite{kangwai2000,op,bernd2010}. This block-decomposition of $R(\G,p)$ can be used to break up the infinitesimal rigidity analysis of $(\G,p)$ into a number of independent sub-problems, one for each block matrix. A number of results concerning the infinitesimal rigidity of symmetric frameworks have been obtained via this approach (see e.g. \cite{SchTan15,ike}).

Another major research area is to study the rigidity of symmetric frameworks under the additional constraint that any motion must  preserve the original symmetry of the framework. If all symmetry-preserving motions of a $\tau(\Gamma)$-symmetric framework $(\G,p)$ are trivial, then $(\G,p)$ is called \emph{forced $\tau(\Gamma)$-symmetric rigid}.  This is a weaker notion than rigidity, because a forced $\tau(\Gamma)$-symmetric framework may still have non-trivial motions that destroy the original symmetry of the framework. By differentiating a symmetry-preserving continuous motion of $(\G,p)$, we obtain a $\tau(\Gamma)$-symmetric infinitesimal motion, that is, an infinitesimal motion whose velocity assignments to the points of $(\G,p)$  exhibit exactly the same symmetry as the configuration $p$. 

\begin{definition}
An infinitesimal motion $u$ of a $\tau(\Gamma)$-symmetric framework $(\G,p)$ is  \emph{$\tau(\Gamma)$-symmetric}  if  \begin{equation}\label{eq:symmot}
\tau(\gamma)(u_i)=u_{\gamma( i)} \quad\textrm{ for all } \gamma\in \Gamma \textrm{ and all } i\in  \V.
\end{equation}
We say that $(\G,p)$ is \emph{$\tau(\Gamma)$-symmetric infinitesimally rigid} if every $\tau(\Gamma)$-symmetric infinitesimal motion is trivial.

A self-stress $\omega$ of $(\G,p)$ is  \emph{$\tau(\Gamma)$-symmetric}  if 
\begin{equation}\label{eq:symss} \omega_{\gamma(e)}=\omega_e\quad\textrm{ for all } \gamma\in \Gamma \textrm{ and all } e\in  \E.
\end{equation}
The framework $(\G,p)$ is  \emph{$\tau(\Gamma)$-symmetric independent} if it has no non-zero $\tau(\Gamma)$-symmetric self-stress. Further, $(\G,p)$ is \emph{$\tau(\Gamma)$-symmetric
isostatic}  if it is  $\tau(\Gamma)$-symmetric infinitesimally rigid and $\tau(\Gamma)$-symmetric independent.
\end{definition}

Note that $(\G,p)$ is $\tau(\Gamma)$-symmetric isostatic if it is  minimally $\tau(\Gamma)$-symmetric infinitesimally rigid, in the sense that the removal of any edge orbit $\Gamma_e$ from $\G$ yields a $\tau(\Gamma)$-symmetric infinitesimally flexible framework \cite{sw2011}.
See Fig.~\ref{symfwks}(a) and (b) for some examples of minimally $\tau(\Gamma)$-symmetric infinitesimally rigid frameworks in the plane. 

\begin{example}
Consider the frameworks in Figure~\ref{symfwks}. They are all infinitesimally (in fact, continuously) flexible. The framework in (a) is (minimally) $\mathcal{C}_4$-symmetric infinitesimally rigid, where $\mathcal{C}_4$ is the rotational point group of order $4$, as the only $\mathcal{C}_4$-symmetric infinitesimal motions are trivial rotations. (Note that $\mathcal{C}_4$ symmetry implies the larger $\mathcal{C}_{4v}$ symmetry in this example and the framework is also (minimally) $\mathcal{C}_{4v}$-symmetric infinitesimally rigid.) 

The frameworks in (b)  and (c) are  $\mathcal{C}_s$- and $\mathcal{C}_2$-symmetric, respectively. The one in (b) is (minimally) $\mathcal{C}_s$-symmetric infinitesimally rigid, whereas the one in (c) is $\mathcal{C}_2$-symmetric infinitesimally flexible. In fact, it has a continuous motion that preserves the half-turn symmetry.
\end{example}
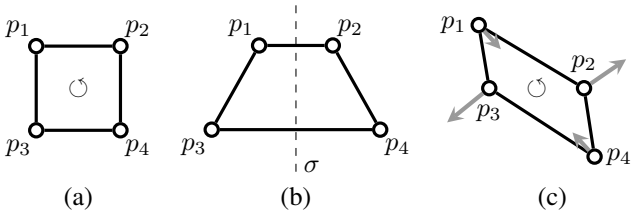
\begin{figure}[htp]
\begin{center}
        \begin{tikzpicture}[very thick,scale=0.7]
\tikzstyle{every node}=[circle, draw=black, fill=white, inner sep=0pt, minimum width=5pt];
    \path (-0.8,-0.8) node (p3) [label = below left: $p_3$] {} ;
    \path (0.8,-0.8) node (p4) [label = below right: $p_4$] {} ;
    \path (0.8,0.8) node (p2) [label = above right: $p_2$] {} ;
     \path (-0.8,0.8) node (p1) [label = above left: $p_1$] {} ;
      \draw (p1) -- (p3);
    \draw (p3) -- (p4);
    \draw (p2) -- (p4);
    \draw (p2) -- (p1);
    \node [draw=white, fill=white] (b) at (0,0) { $\circlearrowleft$};
          \node [draw=white, fill=white] (b) at (0,-2.1) {(a)};
        \end{tikzpicture}
        \hspace{0.1cm}
        \begin{tikzpicture}[very thick,scale=0.7]
\tikzstyle{every node}=[circle, draw=black, fill=white, inner sep=0pt, minimum width=5pt];
    \path (-0.7,0.8) node (p1) [label = above left: $p_1$] {} ;
    \path (0.7,0.8) node (p2) [label = above right: $p_2$] {} ;
    \path (-1.6,-0.8) node (p3) [label = below left: $p_3$] {} ;
     \path (1.6,-0.8) node (p4) [label = below right: $p_4$] {} ;
      \draw (p1) -- (p3);
    \draw (p3) -- (p4);
    \draw (p2) -- (p4);
    \draw (p2) -- (p1);
     \draw [dashed, thin] (0,-1.6) -- (0,1.6);
     \node [draw=white, fill=white] (b) at (0.3,-1.5) {$\sigma$};
      \node [draw=white, fill=white] (b) at (0,-2.1) {(b)};
        \end{tikzpicture}
        \hspace{0.1cm}
\begin{tikzpicture}[very thick,scale=0.7]
\tikzstyle{every node}=[circle, draw=black, fill=white, inner sep=0pt, minimum width=5pt];
    \path (0.1,1.2) node (p1) [label = left: $p_{1}$] {} ;
    \path (2.1,0) node (p4) [label = above: $p_{2}$]{} ;
    \path (2.3,-1.3) node (p3) [label = right: $p_{4}$] {} ;
     \path (0.3,0) node (p2) [label = below: $p_{3}$] {} ;
        \draw (p1) -- (p4);
      \draw (p3) -- (p4);
     \draw (p2) -- (p3);
      \draw (p2) -- (p1);
            \draw [ultra thick, ->,>=stealth, black!40!white](p1) -- (0.52,0.74);
      \draw [ultra thick, ->,>=stealth, black!40!white](p3) -- (1.88,-0.84);
      \draw [ultra thick, ->,>=stealth, black!40!white](p2) -- (-0.5,-0.65);
      \draw [ultra thick, ->,>=stealth, black!40!white](p4) -- (2.9,0.55);
      \filldraw[fill=black, draw=black]
    (1.2,-0.05) circle (0.004cm);

    
    
 \node [draw=white, fill=white] (b) at (1.2,0) { $\circlearrowleft$};
      \node [draw=white, fill=white] (a) at (1.5,-2.1) {(c)};
    \end{tikzpicture}
    \end{center}
\vspace{-0.3cm}
\caption{Symmetric frameworks with $C_4$ as underlying graph. (a) is $\mathcal{C}_{4v}$-symmetric (and hence $\tau(\Gamma)$-symmetric for any subgroup $\tau(\Gamma)$ of $\mathcal{C}_{4v}$) and  (b) and (c) are $\mathcal{C}_s$-symmetric (with respect to the reflection $\sigma$) and $\mathcal{C}_2$-symmetric, respectively. The framework in (c) has a non-trivial $\mathcal{C}_2$-symmetric infinitesimal motion, which extends to a continuous symmetry-preserving motion. }
\label{symfwks}
\end{figure}

A key motivation for studying $\tau(\Gamma)$-symmetric infinitesimal rigidity is that a $\tau(\Gamma)$-symmetric infinitesimal motion extends to a continuous, symmetry-preserving motion of the framework, provided that the configuration is sufficiently generic with the given symmetry constraints \cite{sch10}. In the present paper, we will exploit the notion of $\tau(\Gamma)$-symmetric infinitesimal rigidity to reduce the communication/sensing requirements in multi-agent formations.

One of the block matrices of the block-decomposed rigidity matrix (the block matrix $R_0(\G,p)$ corresponding  to the trivial irreducible representation of the group $\tau(\Gamma)$ which assigns $1$ to each group element) has the property that its kernel is isomorphic to the space of $\tau(\Gamma)$-symmetric infinitesimal motions and its cokernel is isomorphic to the space of $\tau(\Gamma)$-symmetric self-stresses. Thus, in the study of forced-symmetric (infinitesimal) rigidity, this block matrix plays the same role as the rigidity matrix in the standard non-symmetric theory.
In the next section we will introduce the \emph{orbit rigidity matrix}", which is equivalent to the block matrix $R_0(\G,p)$ and whose entries have a simple form similar to the entries of the standard rigidity matrix.

\subsection{The orbit rigidity matrix}\label{sec.orbitrigidity}

Given a $\tau(\Gamma)$-symmetric framework $(\G,p)$, it requires a non-trivial computation to obtain the block matrices of the block-decomposed rigidity matrix of $(\G,p)$. However, it was shown in \cite{sw2011} that the block matrix $R_0(\G,p)$  describing the $\tau(\Gamma)$-symmetric infinitesimal rigidity of $(\G,p)$ is equivalent to another matrix, called the \emph{orbit rigidity matrix}, which can be written down in a simple and direct way, without using any group representation theory.

For simplicity  we will assume from now on that the symmetry is always free (recall Definition~\ref{def:free}). While all of our results are expected to extend to the non-free case, this assumption  simplifies the definition of the orbit rigidity matrix, and hence the entire forced-symmetric formation control theory, significantly. So we will leave the non-free case for future work.
\begin{assumption}\label{ass.free}
The $\tau(\Gamma)$-symmetric framework $(\G,p)$ is free.
\end{assumption}

To describe the orbit rigidity matrix we first need to introduce the notion of a quotient gain graph of a $\Gamma$-symmetric graph, which in turn relies on the notions of vertex and edge orbits introduced in Definition~\ref{def:orb}.

\begin{definition}
Let $\G$ be $\Gamma$-symmetric graph with representative vertex set $\V_0$ and representative edge set $\E_0$. 
The \emph{quotient $\Gamma$-gain graph} $\G_0$ of $\G$ is the directed multigraph with vertex set $\V_0$ whose edge set $\E_0$ has the directed edge $(i,j)$ with group label (or \emph{gain}) $\gamma\in\Gamma$, denoted by $((i,j);\gamma)$, for each edge orbit representative $i\gamma(j)$.
\end{definition}




Note that different choices of vertex representatives give rise to different, but equivalent quotient $\Gamma$-gain graphs. Moreover, for a fixed choice of $\V_0$, the edge $((i,j);\gamma)$ in $\E_0$, where $i\neq j$, is equivalent to the edge $((j,i);\gamma^{-1})$, so the orientation of non-loop edges in $\E_0$ may be reversed by changing the gain to its inverse.  

\begin{example}
The framework $(C_4,p)$ in Figure~\ref{symfwks}(a) has $\mathcal{C}_{4v}$ symmetry, but we may consider it as a framework with the smaller rotational $\mathcal{C}_4$ symmetry (where $\mathcal{C}_4$ is generated by the $90^\circ$ rotation  about the origin).
Let $\Gamma=\{\mathrm{id},\psi_1,\psi_1^2,\psi_1^3\}$ be the corresponding subgroup of $\aut(C_4)$, where $\psi_1$ is the automorphism defined in Example~\ref{ex:aut}. For simplicity,  we will identify the groups $\aut(C_4)$ and $\mathcal{C}_4$ in the following discussion. Note that the symmetry $\mathcal{C}_4$ is free and there is only one vertex orbit under $\mathcal{C}_4$.  We may pick vertex $1$ as its representative. Then there is only one edge orbit, represented by the edge $13$, for example, which in the quotient $\mathcal{C}_4$-gain graph of $C_4$ corresponds to the loop at $1$ with gain $\psi_1$. The quotient $\mathcal{C}_4$-gain graph of $C_4$ is shown in Figure~\ref{symfwksquot}(a).

For the $\mathcal{C}_s$-symmetric framework in Figure~\ref{symfwks}(b), the corresponding subgroup $\Gamma$ of $\aut(C_4)$ is the group consisting of the identity and $\psi_4=(12)(34)$, as defined in Example~\ref{ex:aut}. Again we identify $\Gamma$ and $\mathcal{C}_s$. Since $\psi_4$ has two cycles (each of length $2$), there are two vertex orbits. We may pick the representatives $1$ and $3$ for these vertex orbits. Then there are three edge orbits, one of size $2$ represented by the edge $13$, and two of size $1$, consisting of the edges $12$ and $34$, respectively. This leads to the  quotient $\mathcal{C}_s$-gain graph of $C_4$  shown in Figure~\ref{symfwksquot}(b). Note that the directed edge $(3,1)$ has the identity gain  since it joins two vertex orbit representatives, whereas the loops have the non-trivial gain $\psi_4$. 

Finally, the corresponding subgroup $\Gamma$ of $\aut(C_4)$
for the $\mathcal{C}_2$-symmetric framework in Figure~\ref{symfwks}(c) consists of the identity and the automorphism $\psi_2=(14)(23)$. In this case there are again two vertex orbits, which we may represent by the vertices $1$ and $2$, and two edge orbits, represented by $12$ and $13$.   The corresponding quotient $\mathcal{C}_2$-gain graph of $C_4$ is shown in Figure~\ref{symfwksquot}(c). It has two parallel edges from $1$ to $2$, one with identity gain and one with gain $\psi_2$.
\end{example}

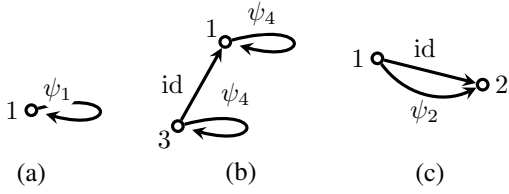
\begin{figure}[htp]
\begin{center}
\begin{tikzpicture}[very thick,scale=0.5]
\tikzstyle{every node}=[circle, draw=black, fill=white, inner sep=0pt, minimum width=4pt];

\path (0,0) node (p1) [label = left: $1$] {} ;

    \path(p1) edge [loop right,->, >=stealth,shorten >=3pt,looseness=60] (p1);  
        
        \node [draw=white, fill=white] (a) at (0.7,0.55) {$\psi_1$};
        
         \node [draw=white, fill=white] (a) at (0,-1.7) {(a)};

                  \end{tikzpicture}
                  \hspace{0.2cm}
          \begin{tikzpicture}[very thick,scale=0.7]
           \tikzstyle{every node}=[circle, draw=black, fill=white, inner sep=0pt, minimum width=4pt];
            \path (-0.7,0.8) node (p1) [label = above left: $1$] {};
            \path (-1.6,-0.8) node (p3) [label = below left: $3$] {} ;
             \draw[>=stealth,<-](p1)--(p3);

              \path(p1) edge [loop right,->, >=stealth,shorten >=3pt,looseness=60] (p1);  
         \path(p3) edge [loop right,->, >=stealth,shorten >=3pt,looseness=60] (p3);
         
       \node [draw=white, fill=white] (a) at (0,1.35) {$\psi_4$}; 
       \node [draw=white, fill=white] (a) at (-0.5,-0.2) {$\psi_4$};
       \node [draw=white, fill=white] (a) at (-1.7,0) {$\mathrm{id}$};
        
         \node [draw=white, fill=white] (a) at (-0.4,-1.7) {(b)};
        
                  \end{tikzpicture}  
                  \hspace{0.2cm}
        \begin{tikzpicture}[very thick,scale=0.7]
\tikzstyle{every node}=[circle, draw=black, fill=white, inner sep=0pt, minimum width=4pt];
\node [draw=white, fill=white] (a) at (1.4,0.7) {$\mathrm{id}$};
    \node [draw=white, fill=white] (a) at (1.4,-0.5) {$\psi_2$};
 \path (0.5,0.5) node (p1) [label = left: $1$] {} ;
       \path (2.5,0) node (p2) [label = right: $2$] {} ;
 \draw[>=stealth,->] (p1) -- (p2);
    \draw(p1) edge [->,>=stealth,bend right=40] (p2);

  
    
 
      \node [draw=white, fill=white] (c) at (1.5,-1.7) {(c)};
    \end{tikzpicture}          
   \end{center}
\vspace{-0.3cm}
\caption{The quotient $\Gamma$-gain graphs of the  graphs of the frameworks in Fig.~\ref{symfwks}. }
\label{symfwksquot}
\end{figure}

We are now ready to define the orbit rigidity matrix which describes the $\tau(\Gamma)$-symmetric infinitesimal rigidity properties of a $\tau(\Gamma)$-symmetric framework. In particular, as we will see in Theorem~\ref{thm:omatrix}, its kernel and cokernel are isomorphic to the space of $\tau(\Gamma)$-symmetric infinitesimal motions and self-stresses of $(\G,p)$, respectively.

\begin{definition} \label{def:om}
Let $\G$ be a $\Gamma$-symmetric graph, where the symmetry is free, and let $(\G,p)$ be a $\tau(\Gamma)$-symmetric framework. Further, let $\G_0=(\V_0,\E_0)$ be the quotient $\Gamma$-gain graph of $\G$ and denote $\bar{p}=p|_{\V_0}$, the restriction of the configuration to only the representative vertices in $\V_0$.  Then the \emph{orbit rigidity matrix} $\mathcal{O}(\G_0,\bar{p})$ of $(\G,p)$ is the $|\E_0|\times d|\V_0|$ matrix defined as follows. The row corresponding to an edge $((i,j);\gamma)$, where $i\neq j$, has the  form:
 {\footnotesize $$\left(\begin{array}{ccccc}   0  \cdots  0 & (\bar p_{i}-\tau(\gamma)\bar p_{j})^T & 0  \cdots  0 & (\bar p_{j}-\tau(\gamma)^{-1}\bar p_{i})^T &  0   \cdots  0 \end{array} \right),
$$}with the $d$-dimensional entries  $(\bar p_{i}-\tau(\gamma)\bar p_{j})^T$ and $(\bar p_{j}-\tau(\gamma)^{-1}\bar p_{i})^T$ being in the columns corresponding to vertex $i$ and $j$, respectively.
The row corresponding to a loop $((i,i);\gamma)$ has the  form:
 $$\left(\begin{array}{ccc}   0  \cdots  0 & (2\bar p_{i}-\tau(\gamma)\bar p_{i}-\tau(\gamma)^{-1}\bar p_i)^T & 0  \cdots  0   \end{array} \right),
$$
with the $d$-dimensional entry  $(2\bar p_{i}-\tau(\gamma)\bar p_{i}-\tau(\gamma)^{-1}\bar p_i)^T$ being in the columns corresponding to  vertex $i$.
\end{definition}

\begin{example}\label{ex:oma}
The framework in Figure~\ref{symfwks}(a) has the quotient $\mathcal{C}_4$-gain graph shown in Figure~\ref{symfwksquot}(a). The matrix representing the rotation $c_4$ by $90^\circ$ is given by $\begin{bmatrix} 0& -1\\ 1&0\end{bmatrix}$ and its inverse is $\begin{bmatrix}  0& 1\\ -1&0\end{bmatrix} $. So for $p_1=\begin{bmatrix} x_1&y_1\end{bmatrix} ^T$, the orbit rigidity matrix is the $1\times 2$ matrix 
$$\mathcal{O}(\G,\bar p)=\begin{bmatrix}  2x_1 & 2y_1 \end{bmatrix} .
$$
Note that the kernel of this matrix is the  $1$-dimensional space spanned by the rotational vector $\begin{bmatrix}-y_1 & x_1\end{bmatrix}^T$ at $p_1$, which corresponds to the $\mathcal{C}_4$-symmetric infinitesimal rotation of the framework via \eqref{eq:symmot}. So this confirms that the framework is $\mathcal{C}_4$-symmetric infinitesimally rigid. Since the cokernel is trivial, the framework has no non-trivial $\mathcal{C}_4$-symmetric self-stress, and hence the framework is $\mathcal{C}_4$-symmetric isostatic. 

The framework in Figure~\ref{symfwks}(b) has the quotient $\mathcal{C}_s$-gain graph shown in Figure~\ref{symfwksquot}(b). The matrix representing the reflection $\sigma$ is given by $\begin{bmatrix}  -1& 0\\ 0&1\end{bmatrix} $ and this matrix is equal to its inverse. So for  $p_i=\begin{bmatrix} x_i &y_i \end{bmatrix}^T $, $i=1,3$, the orbit rigidity matrix is the $3\times 4$ matrix 
$$\mathcal{O}(\G,\bar p)=\begin{bmatrix}  x_1-x_3 & y_1-y_3 & x_3-x_1 & y_3-y_1\\
4x_1 & 0 & 0 & 0\\
0& 0& 4x_3 &0\end{bmatrix}.
$$
The kernel of this matrix is the  $1$-dimensional space spanned by the vector $\begin{bmatrix}0 & 1 & 0 & 1\end{bmatrix}^T$, which corresponds to the $\mathcal{C}_s$-symmetric vertical infinitesimal translation of the framework via \eqref{eq:symmot}. Thus, the framework is $\mathcal{C}_s$-symmetric infinitesimally rigid. In fact, it is $\mathcal{C}_s$-symmetric isostatic, since the cokernel is again trivial. 

Note that since both frameworks are $\tau(\Gamma)$-symmetric isostatic, the removal of any edge orbit, or equivalently the removal of any edge in the quotient $\Gamma$-gain graph, yields a $\tau(\Gamma)$-symmetric infinitesimally flexible framework.

Finally, for the framework in Figure~\ref{symfwks}(c), the quotient $\mathcal{C}_s$-gain graph is shown in Figure~\ref{symfwksquot}(c). The matrix representing the half-turn  is given by $\begin{bmatrix}  -1& 0\\ 0&-1\end{bmatrix} $ and this matrix is equal to its inverse. For $p_i=\begin{bmatrix}x_i &y_i\end{bmatrix}^T$, $i=1,2$, the orbit rigidity matrix is the $2\times 4$ matrix  $$\mathcal{O}(\G,\bar p)=\begin{bmatrix}  x_1-x_2 & y_1-y_2 & x_2-x_1 & y_2-y_1\\
x_1+x_2 & y_1+y_2 & x_2+x_1 & y_2+y_1\\
\end{bmatrix}.$$
The $2$-dimensional kernel of this matrix consists of a $1$-dimensional space of vectors that correspond to infinitesimal rotations, and a $1$-dimensional space of vectors that correspond to non-trivial $\mathcal{C}_2$-symmetric infinitesimal motions of the framework (see Figure~\ref{symfwks}(c)).
\end{example}

The key properties of the orbit rigidity matrix,  established in \cite{sw2011}, are summarized in the following theorem.

\begin{theorem} \label{thm:omatrix}
    Let $(\G,p)$ be a $\tau(\Gamma)$-symmetric framework with orbit rigidity matrix $\mathcal{O}(\G_0,\bar{p})$. Then, 
    \begin{itemize} 
    \item[(i)] the kernel of $\mathcal{O}(\G_0,\bar{p})$ is isomorphic to  the space of $\tau(\Gamma)$-symmetric infinitesimal motions of $(\G,p)$, and
    \item[(ii)] the cokernel of $\mathcal{O}(\G_0,\bar{p})$ is isomorphic to  the space of $\tau(\Gamma)$-symmetric self-stresses of $(\G,p)$.
    \end{itemize}
\end{theorem}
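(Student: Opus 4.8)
The plan is to exhibit both isomorphisms concretely: a $\tau(\Gamma)$-symmetric infinitesimal motion (respectively, self-stress) of $(\G,p)$ is, by the symmetry relations, completely determined by its values on the representative vertices $\V_0$ (respectively, representative edges $\E_0$), and under these restriction maps the defining equations of $(\G,p)$ collapse exactly onto the rows and columns of $\mathcal{O}(\G_0,\bar p)$. The only ingredients required are the symmetry identities $p_{\gamma(i)}=\tau(\gamma)\bar p_i$ and $u_{\gamma(i)}=\tau(\gamma)\bar u_i$, the fact that $\tau$ is a homomorphism into $O(\mathbb{R}^d)$ (so each $\tau(\gamma)$ is orthogonal, and $\langle a,\tau(\gamma)b\rangle=\langle\tau(\gamma)^{-1}a,b\rangle$), and Assumption~\ref{ass.free}: freeness is precisely what makes every vertex of $\G$ uniquely of the form $\gamma(i)$ with $i\in\V_0$, and hence what makes the inverse (``unfolding'') maps single-valued.

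For part (i), I would first note that $u\mapsto\bar u:=u|_{\V_0}$ maps $\tau(\Gamma)$-symmetric velocity assignments bijectively onto $\mathbb{R}^{d|\V_0|}$, with inverse $\bar u\mapsto\bigl(\gamma(i)\mapsto\tau(\gamma)\bar u_i\bigr)$ (well defined by freeness, $\tau(\Gamma)$-symmetric because $\tau$ is a homomorphism), and then check that this bijection carries the symmetric infinitesimal motions onto $\ker\mathcal{O}(\G_0,\bar p)$. The crux is a one-line computation: fixing an edge orbit with quotient edge $((i,j);\gamma)$ and representative $i\gamma(j)\in\E$, for a symmetric $u$ the constraint $\langle p_i-p_{\gamma(j)},\,u_i-u_{\gamma(j)}\rangle=0$ becomes $\langle\bar p_i-\tau(\gamma)\bar p_j,\,\bar u_i-\tau(\gamma)\bar u_j\rangle=0$, and expanding the second argument using $\langle a,\tau(\gamma)b\rangle=\langle\tau(\gamma)^{-1}a,b\rangle$ rewrites it as $(\bar p_i-\tau(\gamma)\bar p_j)^T\bar u_i+(\bar p_j-\tau(\gamma)^{-1}\bar p_i)^T\bar u_j=0$ --- exactly the row of $\mathcal{O}(\G_0,\bar p)$ indexed by $((i,j);\gamma)$ applied to $\bar u$ (the same manipulation with $i=j$ yields the loop row). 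Finally, acting by $\tau(\gamma')$ shows that for symmetric $u$ the constraint on the representative edge is equal to the constraint on every edge of its orbit, so $u$ is an infinitesimal motion of $(\G,p)$ if and only if $\mathcal{O}(\G_0,\bar p)\bar u=0$.

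Part (ii) I would treat dually. A $\tau(\Gamma)$-symmetric self-stress $\omega$ is constant on edge orbits, hence determined by $\bar\omega:=\omega|_{\E_0}\in\mathbb{R}^{|\E_0|}$, and conversely any such vector extends uniquely to an edge-orbit-constant function on $\E$. Acting by $\tau(\gamma)$ turns the equilibrium condition $\sum_{\{j:ij\in\E\}}\omega_{ij}(p_i-p_j)=0$ at a vertex $i$ into the equilibrium condition at $\gamma(i)$, so $\omega$ is a self-stress exactly when equilibrium holds at every representative vertex $i\in\V_0$. It then remains to identify the equilibrium condition at $i$ with the statement that $\bar\omega$ annihilates the block of $d$ columns of $\mathcal{O}(\G_0,\bar p)$ indexed by $i$: using freeness one checks that a quotient edge $((i,j);\gamma)$ with $j\neq i$ contributes the single incident edge $i\gamma(j)$ and the term $\bar\omega_{((i,j);\gamma)}(\bar p_i-\tau(\gamma)\bar p_j)$, a quotient edge $((a,i);\gamma)$ contributes $\bar\omega_{((a,i);\gamma)}(\bar p_i-\tau(\gamma)^{-1}\bar p_a)$, and a loop $((i,i);\gamma)$ contributes, through its one or two incident edges, $\bar\omega_{((i,i);\gamma)}(2\bar p_i-\tau(\gamma)\bar p_i-\tau(\gamma)^{-1}\bar p_i)$ up to a harmless nonzero scalar. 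Summing over all edges incident to $i$ reproduces exactly the $i$-block of $\mathcal{O}(\G_0,\bar p)^T\bar\omega$, and ranging over $i\in\V_0$ shows $\bar\omega$ lies in the cokernel, which proves (ii).

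The main obstacle is this last matching step: one must correctly enumerate, orbit by orbit, the edges of $\G$ incident to a representative vertex while tracking gains, and in particular handle the edges with non-trivial stabiliser --- equivalently the loops whose gain has order two --- where a single incident edge of $\G$ corresponds to one row of $\mathcal{O}(\G_0,\bar p)$, so that the bijection onto the cokernel requires rescaling that component rather than being the naive restriction. If one prefers to bypass this bookkeeping, the alternative is to invoke that $\mathcal{O}(\G_0,\bar p)$ coincides, after a symmetry-adapted change of basis, with the block $R_0(\G,p)$ of the block-decomposed rigidity matrix \cite{sw2011,op,bernd2010}, and that $\ker R_0(\G,p)$ and the cokernel of $R_0(\G,p)$ are the spaces in question by standard group-representation theory; the explicit computation above is precisely what makes that change of basis concrete. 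Throughout, freeness is what keeps the unfolding maps single-valued, and the non-free case is deferred, as elsewhere in the paper.
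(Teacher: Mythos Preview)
Your proposal is correct and follows exactly the approach sketched in the paper: the paper does not prove Theorem~\ref{thm:omatrix} in full but cites \cite{sw2011} and outlines precisely the restriction/unfolding bijections $u\leftrightarrow\bar u$ and $\omega\leftrightarrow\bar\omega$ that you spell out. Your detailed verification---including the careful handling of loops with order-two gain, where a single edge of $\G$ is incident to $i$ and a rescaling is needed for the cokernel isomorphism---is exactly the bookkeeping that \cite{sw2011} carries out, so there is nothing to add.
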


More precisely, an element ${\bar u}$  in the kernel of $\mathcal{O}(\G_0,\bar{p})$ assigns a velocity vector to each vertex orbit representative in $\V_0$. From this we can uniquely construct the corresponding $\tau(\Gamma)$-symmetric infinitesimal motion of $(\G,p)$ via \eqref{eq:symmot}. Conversely, if we restrict any $\tau(\Gamma)$-symmetric infinitesimal motion of $(\G,p)$ to $\V_0$, then we obtain a vector in the kernel of $\mathcal{O}(\G_0,\bar{p})$.
Similarly, each element ${\bar \omega}$  in the cokernel of $\mathcal{O}(\G_0,\bar{p})$ assigns a scalar to each edge orbit representative in $\E_0$. From this we can uniquely construct the corresponding $\tau(\Gamma)$-symmetric self-stress of $(\G,p)$ via \eqref{eq:symss}, and vice versa. See \cite{sw2011} for details. 

It is a simple consequence of Theorem~\ref{thm:omatrix} that if a $\tau(\Gamma)$-symmetric framework $(\G,p)$ is $\tau(\Gamma)$-symmetric infinitesimally rigid, then the same is true for all $\tau(\Gamma)$-symmetric frameworks $(\G,q)$ in an open neighborhood of  $p$ (and in fact for an open dense subset of the set of $\tau(\Gamma)$-symmetric realisations of $\G$ as a bar-joint framework).  

Using the orbit rigidity matrix, efficient Laman-type combinatorial characterisations of the graphs that `generically' yield $\tau(\Gamma)$-symmetric infinitesimally rigid frameworks in the plane have been established for $\mathcal{C}_s$, $\mathcal{C}_n$ and $\mathcal{C}_{(2n+1)v}$, $n\geq 1$ (in the case where the symmetry is free) in \cite{jtk}. The problem remains open for the dihedral groups $\mathcal{C}_{2nv}$. Moreover, as in the non-symmetric situation, there are no analogues of these results in higher dimensions.

Of importance to this work is a natural corollary of Theorem \ref{thm:omatrix} describing the rank of the orbit rigidity matrix.

\begin{corollary}\label{cor.orbitisostatic}
    Let $(\G,p)$ be a $\tau(\Gamma)$-symmetric independent framework with orbit rigidity matrix $\mathcal{O}(\G_0,\bar{p})$. Then $\mathcal{O}(\G_0,\bar{p})$ has full row-rank.
\end{corollary}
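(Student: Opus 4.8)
The plan is to derive this immediately from part~(ii) of Theorem~\ref{thm:omatrix}. Recall the elementary linear-algebra fact that for an $m\times n$ real matrix $M$ the cokernel is the left null space $\{\,x\in\R^m : x^TM=0\,\}$, and that $M$ has full row-rank (rank equal to $m$) precisely when this left null space is the zero space. Hence it suffices to show that the cokernel of $\mathcal{O}(\G_0,\bar p)$ is trivial, and the number of rows of $\mathcal{O}(\G_0,\bar p)$ is $|\E_0|$, so "full row-rank" means $\operatorname{rank}\mathcal{O}(\G_0,\bar p)=|\E_0|$.

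By hypothesis $(\G,p)$ is $\tau(\Gamma)$-symmetric independent, i.e.\ it carries no non-zero $\tau(\Gamma)$-symmetric self-stress, so the space of $\tau(\Gamma)$-symmetric self-stresses of $(\G,p)$ is $\{0\}$. By Theorem~\ref{thm:omatrix}(ii) the cokernel of $\mathcal{O}(\G_0,\bar p)$ is isomorphic, as a vector space, to this space; since a linear isomorphism carries the zero space to the zero space, the cokernel of $\mathcal{O}(\G_0,\bar p)$ is $\{0\}$. Combining with the observation of the previous paragraph, $\mathcal{O}(\G_0,\bar p)$ has full row-rank, which proves the corollary.

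There is essentially no obstacle here: all the substance is contained in Theorem~\ref{thm:omatrix}, quoted from \cite{sw2011}, and what remains are the routine facts recalled above. The only points meriting a sentence of care are (a) making explicit the equivalence between triviality of the left null space and full row-rank, and (b) noting that the correspondence in Theorem~\ref{thm:omatrix}(ii) is a vector-space isomorphism, so triviality is preserved in both directions. For completeness one may also remark that ordinary independence of $(\G,p)$ (no non-zero self-stress at all) already implies $\tau(\Gamma)$-symmetric independence \emph{a fortiori}, so the conclusion holds under that weaker-looking hypothesis as well.
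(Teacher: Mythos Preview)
Your proof is correct and is precisely the argument the paper has in mind: the corollary is presented there as a ``natural corollary of Theorem~\ref{thm:omatrix}'' without further elaboration, and your derivation via part~(ii) of that theorem is the intended one-line justification.
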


In particular, by Corollary~\ref{cor.orbitisostatic},  the orbit rigidity matrix $\mathcal{O}(\G_0,\bar{p})$  of a $\tau(\Gamma)$-symmetric isostatic framework $(\G,p)$ has full row-rank. In addition, it has the desired rank to guarantee $\tau(\Gamma)$-symmetric infinitesimal rigidity. This rank depends on the point group of $(\G,p)$ \cite{jtk}.

It is useful to keep in mind the parallels between the standard rigidity matrix $R(\mathcal G,p)$ and the orbit rigidity matrix.  Corollary \ref{cor.orbitisostatic} can thus be thought of as the symmetric counterpart to the known result that the rigidity matrix of an independent  framework has full row-rank. If the framework is isostatic then it also has the desired rank to guarantee infinitesimal rigidity (namely $2n-3$ in the plane).

\section{Forced Symmetric Formation Control}\label{sec.symform}

We now would like to study a variation of the formation control problem where the goal of each agent in the network is to obtain a formation corresponding to a \emph{symmetric configuration}.  In other words, starting with a $\Gamma$-symmetric graph $\G$, which can be drawn with maximum point group symmetry $\mathcal{S}$ in $\mathbb{R}^d$, we would like to drive the agents to a special position $p^\star$ such that the framework $(\G,p^\star)$ is $\tau(\Gamma)$-symmetric for a desired subgroup of $\mathcal{S}$.

\begin{problem}\label{problem_symaquisition}

Consider a group of $n$ integrator agents \eqref{integrator} that interact over the $\Gamma$-symmetric sensing graph $\G$.  Let $p^\star \in \mathbb{R}^{dn}$ be a configuration such that $(\G,p^\star)$ is $\tau(\Gamma)$-symmetric for some desired point group $\tau(\Gamma)$, and let $\V_0$ be a set of representatives of the vertex orbits of $\G$ under $\Gamma$.  Design a control $u_i(t)$ for each agent $i$ 
such that 
\begin{itemize}
    \item[(i)] $\underset{t\to \infty }{\lim} \|p_i(t)-p_j(t)\| = \|p^\star_i-p^\star_j\|$ for all $ij\in \E$;
    \item[(ii)] for each $i\in \mathcal{V}_0$,  $\underset{t\to \infty }{\lim} \|p_u(t)-\tau(\gamma_{vu})p_v(t)\| = 0$ for all   $u,v \in \Gamma_i$. 
\end{itemize}
\end{problem}

We would like to solve Problem \ref{problem_symaquisition} in a distributed fashion, ideally allowing agent $i$ to only obtain information from neighboring agents as defined by $\G$.  Before we proceed, we comment on the information needed to solve this problem.


Requirement (i) in Problem \ref{problem_symaquisition} is the standard formation control constraint introduced in \S\ref{sec.formationcontrol}.  That is, $\|p^\star_i(t)-p^\star_j(t)\|=\mathrm{\bf d}_{ij}$ are the desired distances between neighboring agents. Requirement (ii) aims to enforce the symmetric position between agents that are in the same vertex orbit. The statement of Problem \ref{problem_symaquisition} implicitly assumes that the requirements (i) and (ii) are consistent with each other.

We also point out that it may not be the case that  $u,v \in \Gamma_i$ implies that $uv \in \E$.  
{\color{black}{We will show that the following assumption on the subgraph induced by the vertex orbits is sufficient to realize this constraint.
\begin{assumption}\label{vertexorbit_subgraph}
    The sub-graph induced by each vertex orbit $\Gamma_i$, denoted $\mathcal{G}(\Gamma_i)$, is connected.
\end{assumption}
\vspace{-.5cm}
\textcolor{black}{
\begin{remark}
    Assumption~\ref{vertexorbit_subgraph} is required to ensure all agents within the same vertex orbit eventually exchange information with each other.  This is analogous to standard connectivity assumptions in consensus algorithms, of which this is a generalization \cite{Mesbahi2010}. Relaxation of this assumption would necessitate a more complicated control architecture including distributed observers to estimate the missing relative state information.
\end{remark}
}

The vertex orbits also introduce a natural labeling for the agents in the network.  Without loss of generality, the vertex orbit $\Gamma_i$ will contain the vertices $\{i,i+1,\ldots,i+|\Gamma_i|-1\}$.  Furthermore, we denote by $\mathcal{E}(\Gamma_i)\subset \mathcal E$  the edges in $\mathcal{G}(\mathcal E_i)$.  Similarly, the state-vector $p(t)$ can also be partitioned as $p(t) = \begin{bmatrix} p^1(t)^T & \cdots & p^{|\V_0|}(t)^T\end{bmatrix}^T$ where $p^i(t) \in \mathbb{R}^{d|\Gamma_i|}$, and $p^{i}_1(t)$ is the position associated to vertex $i\in \V_0$.

}}




\subsection{Symmetry potentials and formation stabilization}\label{sec.formationstab}
Our approach to solve Problem \ref{problem_symaquisition} follows the same gradient dynamical system approach used in solving the standard formation control problem, as introduced in Section \ref{sec.formationcontrol}.  We provide first a brief summary of our result from \cite{Zelazo_IFAC2023}, together with some new results.  In this direction, we define a \emph{symmetry-forcing potential},
\textcolor{black}{\begin{align}\label{sym_potential}
\hspace{-12pt}F_s(p(t)) =  \frac{1}{2}\sum_{i \in \V_0} \sum_{\substack{u,v\in \Gamma_i\\uv\in \mathcal{E}}} \|p_u(t) - \tau(\gamma_{vu} )p_v(t)\|^2.
\end{align}}
Here, recall that $\V_0$ is a set of representatives from each of the vertex orbits of a $\Gamma$-symmetric graph.
The \emph{symmetric formation potential} can then be defined as 
\begin{align}\label{sym_form_potential}
\hspace{-12pt}F(p(t)) = F_f(p(t)) + F_s(p(t)),
\end{align}
where $F_f(p(t))$ is the formation potential defined in \eqref{formation_potential}.

We now propose the control
\begin{align}\label{symform_acquire}u(t) = -\nabla F(p(t)),
\end{align}
to solve Problem \ref{problem_symaquisition}.  The closed-loop dynamics then take the form
\begin{align}\label{ctrl_1}\dot{p}(t) = -R(\G,p(t))^T\left(R(\G,p(t))p(t)-\mathrm{\bf d}^2\right) - Qp(t),
\end{align}
where $Q$ is a block diagonal matrix with $|\V_0|$ blocks, where each block is $|\Gamma_i|d \times |\Gamma_i|d$.  With the labeling  of the nodes defined earlier, $Q$ can always be written as
{\small $$Q = \begin{bmatrix} {Q}_1 && \\ &\ddots & \\ && {Q}_{|\V_0|}\end{bmatrix},$$}
and
\textcolor{black}{$$ [Q_i]_{uv} = \begin{cases}
                d_{\Gamma_i}(u)I, & u=v, \, u \in \Gamma_i \\
                -\tau(\gamma_{uv}), & uv\in \mathcal{E}, u,v\in \Gamma_i \\
                0, & \text{o.w.}
            \end{cases},$$
where $d_{\Gamma_i}(u)$ denotes the degree of node $u$ in the induced subgraph $\G(\Gamma_i)$. Observe that $Q_i$ can be expressed as the matrix product $E(\Gamma_i)E(\Gamma_i)^T$, where $E(\Gamma_i)$ has a similar structure as the incidence matrix.  For an edge $k=uv$ with $u,v\in\Gamma_i$, one has  $[E(\Gamma_i)]_{uk}=I$ if the edge $k$ leaves vertex $u$, $[E(\Gamma_i)]_{uk}=-\tau(\gamma_{vu})$ if the edge $k$ enters the vertex $u$, and $[E(\Gamma_i)]_{uk}=0$ otherwise.  Therefore $Q_i$ (and consequently $Q$) is a positive semi-definite matrix.  To further simplify notation, we define $\bar E(\Gamma) = \mathrm{diag}\{E(\Gamma_i)\}_{i=1:|\V_0|}$, and therefore $Q=\bar E(\Gamma)\bar E(\Gamma)^T$. It follows that any configuration that is in a symmetric position must lie in the kernel of $Q$.
}

%
    

\textcolor{black}{It is also useful to examine the expression of the closed-loop dynamics for each agent.  For an agent $i$ in the vertex orbit $\Gamma_u$, the dynamics are
\begin{align*}
    \dot{p}_i(t) &= \sum_{ij\in \E}(\|p_i(t)-p_j(t)\|^2-(\mathrm{\bf d}_{ij})^2)(p_j(t)-p_i(t)) \\
    &+ \sum_{\substack{ij\in\E\\i,j\in \Gamma_u}}(\tau({\gamma_{ij}})p_j(t)-p_i(t)).
\end{align*}
}
{\color{black}{We now show that the closed-loop dynamics \eqref{ctrl_1} has an invariant quantity.
\begin{proposition}
    Consider the closed-loop dynamics \eqref{ctrl_1} and let 
    \begin{align}\label{group_avg}
        z(t) &= \sum_{v \in \V} \sum_{\gamma \in \Gamma} \tau(\gamma)p_v(t).
    \end{align}
    Then $z(t)$ remains invariant along the trajectories of \eqref{ctrl_1}, i.e., $\dot{z}(t) = 0$.
\end{proposition}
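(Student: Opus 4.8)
The plan is to differentiate $z(t)$ along the trajectories of \eqref{ctrl_1} and show that each of the two contributions — the one coming from the standard formation term $-R(\G,p)^T(R(\G,p)p - \mathbf{d}^2)$ and the one coming from the symmetry term $-Qp$ — vanishes after summing over all vertices and all group elements. I would first rewrite the claimed invariant more conveniently: since $\sum_{\gamma\in\Gamma}\tau(\gamma)$ is a fixed $d\times d$ matrix, $z(t) = \bigl(\sum_{\gamma\in\Gamma}\tau(\gamma)\bigr)\sum_{v\in\V}p_v(t)$, so it suffices to understand $\frac{d}{dt}\sum_{v\in\V}p_v(t) = \sum_{v\in\V}\dot p_v(t)$ and then left-multiply by the fixed matrix. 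In fact I expect the stronger statement $\sum_{v\in\V}\dot p_v(t)=0$ may fail (the formation term need not be mean-preserving in general), so the group average is genuinely needed and I will keep $\tau(\gamma)$ in play throughout.

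For the formation term, using the algebraic form \eqref{rigidity_matrix}, $R(\G,p)^T w$ for any $w\in\R^{|\E|}$ is $(E\otimes I_d)\,\mathrm{diag}\{(p_i-p_j)\}_{ij\in\E}\,w$, and the sum of all its vertex-blocks is $(\mathbbm{1}^T E\otimes I_d)(\cdots) = 0$ because every column of the incidence matrix $E$ sums to zero. Hence $\sum_{v\in\V}$ of the formation contribution to $\dot p_v$ is already zero even before averaging over $\Gamma$, and a fortiori it contributes nothing to $\dot z$. For the symmetry term I would argue orbit-by-orbit: the block $Q_i = E(\Gamma_i)E(\Gamma_i)^T$ acts within a single vertex orbit $\Gamma_i$, and I want to show $\sum_{\gamma\in\Gamma}\sum_{v\in\Gamma_i}\tau(\gamma)\,[Q_i p^i]_v = 0$. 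Writing $[Q_i p]_u = \sum_{uv\in\E,\,u,v\in\Gamma_i}(p_u - \tau(\gamma_{uv})p_v)$, I pair each edge $uv\in\E(\Gamma_i)$ with its reversal: the edge contributes $p_u-\tau(\gamma_{uv})p_v$ at vertex $u$ and $p_v-\tau(\gamma_{vu})p_u$ at vertex $v$, so its total contribution to $\sum_{v\in\Gamma_i}[Q_ip]_v$ is $(p_u-\tau(\gamma_{uv})p_v)+(p_v-\tau(\gamma_{vu})p_u)$. Now I left-multiply by $\sum_{\gamma\in\Gamma}\tau(\gamma)$ and use the symmetry relation $\tau(\gamma_{vu})p_v = p_u$ valid for vertices in the same orbit together with the fact that $\{\tau(\gamma)\tau(\gamma_{uv}): \gamma\in\Gamma\} = \tau(\Gamma)$ as a set (group closure), so $\sum_{\gamma}\tau(\gamma)\tau(\gamma_{uv}) = \sum_{\gamma}\tau(\gamma)$; this is exactly the algebraic identity that makes the averaged edge-contribution telescope to zero.

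The main obstacle — and the step that needs the most care — is the second one: the per-edge terms $p_u-\tau(\gamma_{uv})p_v$ are \emph{not} individually zero along arbitrary trajectories (they only vanish once the symmetric formation is reached), so the cancellation must be genuinely algebraic, relying on group closure of $\tau(\Gamma)$ after the average $\sum_\gamma\tau(\gamma)$ is applied, rather than on the state being symmetric. I would therefore state and prove a small lemma of the form: for any $u,v$ in the same orbit $\Gamma_i$ and any $w\in\R^d$, $\bigl(\sum_{\gamma\in\Gamma}\tau(\gamma)\bigr)(w - \tau(\gamma_{uv})\,\tau(\gamma_{vu})w) = 0$ — which is immediate since $\gamma_{uv}\gamma_{vu}$ fixes $v$ and, under the free-action Assumption~\ref{ass.free}, must be the identity, so $\tau(\gamma_{uv})\tau(\gamma_{vu}) = I$ — and, more to the point, that $\sum_\gamma \tau(\gamma)[Q_i p]$ summed over the orbit collapses because each directed edge and its reverse, after left-multiplication by $\sum_\gamma\tau(\gamma)$, yield opposite vectors. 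Once that cancellation is in hand, combining with the incidence-matrix observation for the formation term gives $\dot z(t) = \bigl(\sum_{\gamma\in\Gamma}\tau(\gamma)\bigr)\cdot 0 = 0$, completing the proof.
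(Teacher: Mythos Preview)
Your proposal is correct and follows essentially the same route as the paper: an edge-by-edge cancellation for both the formation term (your incidence-matrix observation $\mathbbm{1}^T E = 0$ is just a compact repackaging of the paper's per-edge pairing $a_{ij}(p_j-p_i)+a_{ji}(p_i-p_j)=0$) and the symmetry term via the group-closure identity $\bigl(\sum_{\gamma}\tau(\gamma)\bigr)\tau(\gamma')=\sum_{\gamma}\tau(\gamma)$, which is exactly the key step in the paper's argument. The only slip is your passing invocation of the relation $\tau(\gamma_{vu})p_v=p_u$, which holds only at symmetric configurations and not along arbitrary trajectories --- but you catch this yourself in the next paragraph, and indeed the group-closure identity alone suffices (your side lemma on $\gamma_{uv}\gamma_{vu}=\mathrm{id}$ is therefore unnecessary).
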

\begin{proof}
    We examine the derivative of $z(t)$,
    \begin{align}\label{zdot}
        \dot{z}(t) &=\sum_{v \in \V} \sum_{\gamma \in \Gamma} \tau(\gamma) \dot{p}_v(t).
    \end{align}
%
    We will study separately the contribution of the distance constraint term and the symmetry-forcing term from each agent in \eqref{zdot}. 

    To begin, we examine the symmetry-forcing contribution to $\dot z(t)$ from an edge $ij\in\E$ with $i,j\in \Gamma_u$ for any $u\in \V_0$. This leads to 
    \begin{align*}
    &\sum_{\gamma\in \Gamma} \tau(\gamma) \left(
    \tau(\gamma_{ij})p_j-p_i
    \right) + \sum_{\gamma\in \Gamma} \tau(\gamma) \left(
    \tau(\gamma_{ji})p_i-p_j
    \right)   &   \\& = 
    \left(\left(\sum_{\gamma\in \Gamma} \tau(\gamma) \right) \tau(\gamma_{ij}) -\sum_{\gamma\in \Gamma} \tau(\gamma) 
\right)p_j(t) \\
&+\left(\left(\sum_{\gamma\in \Gamma} \tau(\gamma) \right) \tau(\gamma_{ji}) -\sum_{\gamma\in \Gamma} \tau(\gamma) 
\right)p_i(t)=0,
    \end{align*}
    where the last equation follows from the fact that 
$\left(\sum_{\gamma\in \Gamma} \tau(\gamma) \right) \tau(\gamma') =\sum_{\gamma\in \Gamma} \tau(\gamma)$
for any $\gamma'\in \Gamma$.  Note that this holds for each  edge connecting agents in the same vertex orbit.

We now look at the contribution from the distance constraint terms in the agent dynamics.  Consider  again  the edge $ij\in \E$ (with no restriction on agent $j$ being in the same vertex orbit of agent $i$).  Let $a_{ij} = a_{ji} =\|p_i-p_j\|^2-\mathrm{\bf d}_{ij}^2$.  Then it is straightforward to verify that
\begin{align*}
    \sum_{\gamma\in \Gamma}\tau(\gamma)a_{ij}(p_j-p_i) + \sum_{\gamma\in \Gamma}\tau(\gamma)a_{ji}(p_i-p_j)=0.
\end{align*}
This also holds for each edge in $\E$.  Together this shows that $\dot{z}(t) = 0$ as claimed.
\end{proof}
}}

We now present the main result of \cite{Zelazo_IFAC2023}.
\begin{theorem}[\cite{Zelazo_IFAC2023}]\label{IFAC_Thm}
Consider a team of $n$ agents \eqref{integrator} interacting over a $\Gamma$-symmetric graph $\G$ satisfying Assumption \ref{vertexorbit_subgraph}, that can be drawn with maximum point group symmetry $\mathcal{S}$ in $\mathbb{R}^d$, and let 
$$\mathcal{F}_f = \{p \in \mathbb{R}^{dn} \, | \, \|p_i-p_j\|=\mathrm{\bf d}_{ij} \, ij\in \E\},$$  and
$$\mathcal{F}_s = \{p \in \mathbb{R}^{dn} \, | \, \tau(\gamma) (p_i)=p_{\gamma (i)} \, \forall \gamma\in \Gamma, \,  i\in \V\}.$$
Then for initial conditions $p_i(0)$ satisfying 
{\small $$\sum_{ij\in\E} (\|p_i(0)-p_j(0)\| - \mathrm{\bf d}_{ij}^2) \leq \epsilon_1,\text{and } \|p_i(0)-\tau(\gamma_j)p_j(0)\|^2\leq \epsilon_2$$}
for all $i\in \V_0$ and $j\in \Gamma_i$,
for a sufficiently small and positive constant $\epsilon_1$ and $\epsilon_2$, the control 
\begin{align}\label{formation_u}
u = -\nabla F(p(t)),
\end{align}
renders the set $\mathcal{F}_f \cap \mathcal{F}_s$ exponentially stable, i.e.
$$\underset{t\to \infty}{\lim} \|p_i(t)-p_j(t)\| = \mathrm{\bf d}_{ij}
\text{ and } \underset{t\to \infty}{\lim}\tau(\gamma) (p_i(t))=\underset{t\to \infty}{\lim} p_{\gamma (i)}(t)$$ for all  $\gamma\in \Gamma, i\in\V.$
\end{theorem}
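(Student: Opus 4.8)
The plan is to prove this as a standard Lyapunov-based local stability argument for a gradient dynamical system, with the symmetric formation potential $F(p) = F_f(p) + F_s(p)$ serving as the Lyapunov function. First I would observe that $F(p)\geq 0$ with equality precisely on $\mathcal{F}_f \cap \mathcal{F}_s$: the term $F_f(p)=0$ iff $p\in\mathcal{F}_f$, and the term $F_s(p)=0$ iff $p_u = \tau(\gamma_{vu})p_v$ for all $u,v$ in a common vertex orbit with $uv\in\mathcal{E}$; by Assumption~\ref{vertexorbit_subgraph} the induced subgraph $\mathcal{G}(\Gamma_i)$ is connected, so this chain of equalities propagates across the whole orbit and forces $p\in\mathcal{F}_s$. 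Along trajectories of \eqref{ctrl_1} we have $\dot F = -\|\nabla F(p)\|^2 \leq 0$, so $F$ is non-increasing; choosing $\epsilon_1,\epsilon_2$ small enough places $p(0)$ in a sublevel set of $F$ that is a bounded neighborhood of $\mathcal{F}_f\cap\mathcal{F}_s$, giving forward invariance and boundedness of the trajectory.

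Next I would set up the local exponential rate. The set $\mathcal{F}_f\cap\mathcal{F}_s$ is not a single point but a manifold (equilibria come in families generated by the trivial motions/isometries compatible with the symmetry, plus the invariant quantity $z(t)$ from the Proposition), so I would work modulo these symmetries — e.g. fixing the conserved centroid-type quantity $z$ and factoring out trivial symmetric infinitesimal motions — and show exponential convergence to the reduced equilibrium. The key linearization computation: the Hessian of $F$ at a point $p^\star\in\mathcal{F}_f\cap\mathcal{F}_s$ decomposes as the sum of the Hessian of $F_f$, which on $\mathcal{F}_f$ reduces to $2R(\G,p^\star)^T R(\G,p^\star)$, and the Hessian of $F_s$, which is exactly $Q = \bar E(\Gamma)\bar E(\Gamma)^T \succeq 0$. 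I would argue that the kernel of $R(\G,p^\star)^TR(\G,p^\star)$ consists of infinitesimal flexes of the framework, and that the additional term $Q$ kills precisely those flexes that are \emph{not} symmetry-preserving: a vector in $\ker(R^TR)\cap\ker Q$ is a symmetric infinitesimal motion, which (after factoring out the trivial ones, using the orbit rigidity matrix picture of Section~\ref{sec.orbitrigidity} and Theorem~\ref{thm:omatrix}) must itself be trivial. Hence the Hessian restricted to the complement of the trivial directions is positive definite, giving a quadratic lower bound $F(p)\geq c\,\mathrm{dist}(p,\mathcal{F}_f\cap\mathcal{F}_s)^2$ locally and therefore $\dot F \leq -c' F$, i.e. exponential stability via a comparison lemma (Lyapunov's theorem for exponential stability / a center-manifold-style reduction onto the equilibrium manifold).

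The main obstacle I expect is precisely this last point: showing that the combined Hessian $2R(\G,p^\star)^TR(\G,p^\star) + Q$ has no spurious kernel directions beyond the genuinely trivial ones. One has to carefully identify the full null space of $Q$ — configurations constant-up-to-$\tau$ on each orbit, which is a $d|\V_0|$-dimensional space — and intersect it with the null space of the rigidity matrix; the claim is that this intersection is isomorphic to the kernel of the orbit rigidity matrix $\mathcal{O}(\G_0,\bar p^\star)$, and one must then invoke that at the configuration $p^\star$ (assumed to realize the desired symmetry in a sufficiently non-degenerate way) this reduces to only trivial symmetric motions. A secondary subtlety is handling the non-point equilibrium set cleanly: one should either quotient out the symmetry group of trivial motions and the conserved $z$, or restrict attention to the transverse directions, so that "exponential stability of the set" is the right notion and LaSalle/linearization arguments apply on the quotient. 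I would also need the standard boundedness-and-positive-invariance bookkeeping to make the word "sufficiently small $\epsilon_1,\epsilon_2$" precise, which is routine once the sublevel-set structure of $F$ near $\mathcal{F}_f\cap\mathcal{F}_s$ is understood.
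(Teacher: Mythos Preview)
The paper does not actually prove this theorem; it is quoted verbatim from the earlier conference paper \cite{Zelazo_IFAC2023} and presented here as background, with no proof given. So there is no ``paper's own proof'' against which to compare your attempt.

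That said, your Lyapunov/gradient-flow argument is the natural one and is almost certainly what the cited source does: take $F=F_f+F_s$ as the Lyapunov function, note $\dot F=-\|\nabla F\|^2$, and linearize at $p^\star$ to get the Hessian $2R(\G,p^\star)^TR(\G,p^\star)+Q$. Your identification of the key step --- showing that $\ker(R^TR)\cap\ker Q$ consists only of trivial $\tau(\Gamma)$-symmetric infinitesimal motions, via the isomorphism with $\ker\mathcal{O}(\G_0,\bar p^\star)$ --- is exactly right, and is the substantive content.

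One genuine issue you have implicitly flagged: this step \emph{requires} $(\G,p^\star)$ to be $\tau(\Gamma)$-symmetric infinitesimally rigid, yet the theorem as stated here carries no such hypothesis (contrast Theorem~\ref{thm.orbitsym_main}, which explicitly assumes $\tau(\Gamma)$-symmetric isostaticity). Without that assumption your Hessian can have a nontrivial kernel transverse to the equilibrium manifold and the exponential rate fails. This is not a flaw in your argument but a missing hypothesis in the theorem statement as reproduced; presumably the original reference includes it, or it is tacitly folded into the standing setup.
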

The implementation of the control \textcolor{black}{\eqref{formation_u}} assumes that agents within the same vertex orbit are able to exchange information with each other (i.e., Assumption \ref{vertexorbit_subgraph} is satisfied). This may not be the case for different symmetries, as illustrated in the next example.


\begin{example}\label{ex.traj2}
Consider the graph in Figure \ref{fig:graphex2}.  This is a $\tau(\Gamma)$-symmetric framework with the point group symmetry specified by the reflection in the $y$-axis.  The vertex orbits  for this symmetry are $\Gamma_1=\Gamma_2=\{1,2\}$, $\Gamma_3=\Gamma_6=\{3,6\}$, and $\Gamma_4=\Gamma_5=\{4,5\}$.  Note that the nodes in the orbit $\Gamma_3$ are not connected by edges in $\G$.  Since Assumption \ref{vertexorbit_subgraph} is not satisfied, implementation of  \eqref{symform_acquire}  requires establishing additional communication between nodes $3$ and $6$.

\begin{figure}[!h]
\begin{center}
\begin{tikzpicture}[very thick,scale=0.7]
\tikzstyle{every node}=[circle, draw=black, fill=white, inner sep=0pt, minimum width=5pt];
\path (1.1,1.1) node (p1) [label = above left: ${1}$] {} ;
\path (1.9,1.1) node (p11) [label = above right: ${2}$] {} ;
       
        \path (2.7,0.5) node (p2) [label = above right: ${3}$] {} ;
        \path (2.4,-.75) node (p3) [label = below right: ${4}$] {} ;
        \path (0.6,-.75) node (p4) [label = below left: ${5}$] {} ;
        \path (0.3,.5) node (p5) [label = above left: ${6}$] {} ;

    \draw (p1) -- (p11);
    \draw (p11) -- (p2);
    \draw (p2) -- (p3);
    \draw (p3) -- (p4);
    \draw (p4) -- (p5);
    \draw (p1) -- (p5);
    \draw (p1) -- (p4);
    \draw (p11) -- (p3);
       
        \draw [dashed, thin] (1.5,-1.6) -- (1.5,1.6);
        \node [draw=white, fill=white] (b) at (1.8,-1.6) {$\sigma$};
    \end{tikzpicture}\vspace{-.35cm}
    \caption{A $\tau(\Gamma)$-symmetric graph with $y$-axis symmetry not satisfying Assumption \ref{vertexorbit_subgraph}.}\label{fig:graphex2}
\end{center}\vspace{-.7cm}
\end{figure}
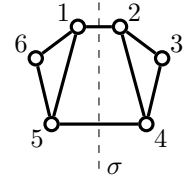


\end{example}

{\color{black}{
We also note that the implementation of \eqref{ctrl_1} requires $|\E|$ communication/sensing links for each distance constraint, and  that the sub-graph induced by the vertex orbits are connected.  In fact, Assumption \ref{vertexorbit_subgraph} can be relaxed further to require only a spanning tree for each sub-graph induced by the vertex orbits.  
\begin{corollary}
    The conditions stated in Theorem \ref{IFAC_Thm} hold if the induced sub-graph of each vertex orbit, $\G(\Gamma_i)$, is a spanning tree.
\end{corollary}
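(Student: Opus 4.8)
The strategy is to isolate which property of Assumption~\ref{vertexorbit_subgraph} is actually used in the proof of Theorem~\ref{IFAC_Thm}, and to check that this property survives when the symmetry-forcing potential is built only from the edges of a spanning tree of each $\G(\Gamma_i)$. Concretely, fix for every $i\in\V_0$ a spanning tree $T_i$ of the induced subgraph $\G(\Gamma_i)$, replace $F_s$ in \eqref{sym_potential} by the truncated potential $\tilde F_s(p)=\tfrac12\sum_{i\in\V_0}\sum_{uv\in T_i}\|p_u-\tau(\gamma_{vu})p_v\|^2$, and run the control $u=-\nabla(F_f+\tilde F_s)$. The corresponding closed-loop system has the form \eqref{ctrl_1} with $Q$ replaced by $\tilde Q=\bar E(T)\bar E(T)^T$, where $\bar E(T)=\mathrm{diag}\{E(T_i)\}_{i=1:|\V_0|}$ is assembled exactly as $\bar E(\Gamma)$ but using only the tree edges; in particular $\tilde Q$ is again positive semi-definite.

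The heart of the argument is the identity $\ker\tilde Q=\ker Q$. Since the symmetry is free (Assumption~\ref{ass.free}), the orbit–stabilizer theorem gives $|\Gamma_i|=|\Gamma|$ and makes $\gamma\mapsto\gamma(i)$ a bijection $\Gamma\to\Gamma_i$; hence each $\gamma_{uv}$ with $\gamma_{uv}(u)=v$ is uniquely determined, $\gamma_{uv}=\gamma_{iv}\gamma_{iu}^{-1}$, $\gamma_{uv}^{-1}=\gamma_{vu}$, and the cocycle relation $\gamma_{uw}=\gamma_{vw}\gamma_{uv}$ holds for all $u,v,w\in\Gamma_i$. The inclusion $\ker\tilde Q\supseteq\ker Q$ is immediate since $\tilde Q$ uses a subset of the edges of $Q$. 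For the converse, take $p$ with $\tilde Q p=0$ and fix an orbit $\Gamma_i$; then $E(T_i)^Tp^i=0$, i.e.\ $p_u=\tau(\gamma_{vu})p_v$ for every edge $uv$ of $T_i$. As $T_i$ spans $\G(\Gamma_i)$, every $v\in\Gamma_i$ is joined to the representative $i$ by a unique $T_i$-path; propagating the relation along that path and applying the cocycle identity gives $p_u=\tau(\gamma_{vu})p_v$ for \emph{all} $u,v\in\Gamma_i$, hence in particular for every edge of $\G(\Gamma_i)$. Therefore $E(\Gamma_i)^Tp^i=0$, i.e.\ $Q_ip^i=0$, for every $i$, so $Qp=0$. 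This also identifies $\ker\tilde Q$ exactly with the set of configurations satisfying requirement (ii) of Problem~\ref{problem_symaquisition}.

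It remains to observe that the proof of Theorem~\ref{IFAC_Thm} uses $Q$ only through the facts that it is positive semi-definite and that $\ker Q$ coincides with the symmetric subspace, so that the critical set of the total potential near $\mathcal F_f\cap\mathcal F_s$ is governed by $R(\G,p)$ along the formation directions and by $Q$ transversally to the symmetric subspace. Since $\tilde Q$ has both properties, the same Lyapunov/linearization estimate yields exponential stability of $\mathcal F_f\cap\mathcal F_s$ under $u=-\nabla(F_f+\tilde F_s)$; the invariance statement of the preceding Proposition is likewise unchanged, as its edge-by-edge cancellation only uses $\big(\sum_{\gamma\in\Gamma}\tau(\gamma)\big)\tau(\gamma')=\sum_{\gamma\in\Gamma}\tau(\gamma)$.

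The main obstacle is the equality $\ker\tilde Q=\ker Q$: this is a ``balanced gain graph'' statement, namely that the cycle-space constraints contributed by the non-tree edges of $\G(\Gamma_i)$ are automatically implied by the spanning-tree constraints once the gains form the consistent system guaranteed by freeness. Getting the orientation bookkeeping right, so that $\gamma_{vu}$ and its inverse $\gamma_{uv}$ land in the correct entries when walking along a tree path, is the only genuinely delicate point; everything afterwards is a mechanical substitution of $\tilde Q$ for $Q$ in the existing proof.
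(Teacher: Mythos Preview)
Your proposal is correct and essentially matches the paper's approach: the paper simply states that ``the proof remains unchanged from Theorem~\ref{IFAC_Thm}'' without further elaboration, so your argument that $\ker\tilde Q=\ker Q$ (via propagation of the symmetry constraint along tree paths using the cocycle relation) is in fact a more complete justification than what the paper provides. The key observation---that a spanning tree already forces the full symmetric-position constraint on each orbit---is exactly the mechanism the paper relies on implicitly.
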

The proof remains unchanged from Theorem \ref{IFAC_Thm}.  The corollary shows that symmetry between agents in the same vertex orbit, say $\Gamma_i$, can be maintained with only $|\Gamma_i|-1$ edges.  Nevertheless, the control proposed in Theorem \ref{IFAC_Thm} still requires to use all the edges encoding the distance constraints.  We now show that this too is redundant and only edges from the representative edge set $\E_0$ need to be considered.

\subsection{Orbit rigidity potentials and formation stabilization}
In this direction, we define a new potential function that aims to satisfy the distance constraints over the representative edges in the edge orbits,
$$F_e(p(t)) = \frac{1}{4} \sum_{e=ij \in \E_0} \left( \| p_i - \tau(\gamma)p_j \|^2-\mathrm{\bf d}_{i\gamma(j)}^2\right)^2,$$
where $\gamma \in \Gamma$ is the label of the edge in the quotient gain graph.
Thus, the new forced-symmetric formation control potential can be expressed as 
\begin{align}\label{sympotential2}F(p(t)) = F_e(p(t)) + F_s(p(t)),
\end{align}
where $F_s(p(t))$ was defined in \eqref{sym_potential}.
As before, we now propose the control
$$u(t)  = -\nabla F(p(t)).$$
To help simplify notations, we denote by $p_0(t)$ the restriction of the configuration vector $p(t)$ to only  the agents in the representative vertex set $\V_0$.  The remaining agents are denoted by the vector $p_f(t)$, so with an appropriate labeling of the agents we can write $\tilde{p}(t) = Pp(t) = \begin{bmatrix} p_0^T(t) & p_f^T(t)\end{bmatrix}^T$, for some permutation matrix $P$. 
Then the control for each agent $i\in \V_0$ can be expressed as
\begin{align}\label{repnode_dynamics}
    u_i(t) &=  u_i^{(a)}(t)+u_i^{(b)}(t)+u_i^{(c)}(t),
\end{align}
where
{\footnotesize\begin{align*}
    u_i^{(a)}(t) &=\hspace{-5pt}\underset{\substack{i\gamma(j) \in \E_0\\ j\in \V_0, \,i\neq j}}{\sum}\hspace{-8pt} \big(\|p_i(t)-\tau(\gamma)p_j(t)\|^2-\mathrm{\bf d}_{ij}^2\big) (\tau(\gamma)p_j(t)-p_i(t))\\
 u_i^{(b)}(t) &=\hspace{-5pt}\underset{\substack{i\gamma(i) \in \E_0}}{\sum}\hspace{-8pt}(\|(I-\tau(\gamma))p_i\|^2-\mathrm{\bf d}_{i\gamma(i)}^2)(2I-\tau(\gamma)-\tau(\gamma)^{-1})p_i \\
 u_i^{(c)}(t) &= \sum_{{ij\in\E(\Gamma_i)}}(\tau({\gamma_{ij}})p_j(t)-p_i(t)).
\end{align*}}
The control for the agents in $\V\setminus \V_0$ is simply
\begin{align}\label{folnode_dynamics}
    u_i(t)&= \sum_{{ij\in\E(\Gamma_v)}}(\tau({\gamma_{ij}})p_j(t)-p_i(t)), 
\end{align}
for each $v \in \V_0$.

When expressing the dynamics in a state-space form, the orbit rigidity matrix explicitly appears, and  we obtain
\begin{align}\label{orbit_symformation}
    \begin{bmatrix} \dot{p}_0(t) \\ \dot{p}_f(t) \end{bmatrix} &= \begin{bmatrix} -\mathcal{O}^T(\mathcal{G}_0,p_0(t))\bigg(\mathcal{O}(\mathcal{G}_0,p_0(t))p_0(t) - \mathrm{\bf d}_0^2 \bigg) \\0\end{bmatrix} \nonumber\\&- PQP^T\begin{bmatrix}p_0(t) \\ p_f(t)\end{bmatrix}.
\end{align}
Here $\mathrm{\bf d}_0$ are the distance constraints for the edges in $\E_0$.  
It is interesting to observe the structure of this system.  The orbit rigidity matrix in \eqref{orbit_symformation} plays the role of the rigidity matrix for \eqref{formation_control} in preserving the distances.  

{\color{black}{
It is now useful to define an error system to study the stability and convergence properties of the system.  In this direction, we let 
$$\bar{\sigma}(t) = \mathcal{O}(\mathcal{G}_0,p_0(t))p_0(t) - \mathrm{\bf d}_0^2\text{, and }\bar q(t)=\bar E(\Gamma)^TP^Tp(t),$$ with $\bar{e}(t) = \begin{bmatrix} \bar \sigma(t)^T & \bar q(t)^T \end{bmatrix}^T$. Then the error dynamics can be expressed as
\begin{align}\label{error_dynamics_orbit}
\begin{bmatrix} \dot{\bar \sigma }(t) \\ \dot{\bar q}(t) \end{bmatrix} &= -\begin{bmatrix} \mathcal{O}\mathcal{O}^T & \mathcal{O} \bar E_0(\Gamma) \\ \bar E_0^T(\Gamma)\mathcal{O}^T & \bar E^T(\Gamma)\bar E(\Gamma) \end{bmatrix} \begin{bmatrix} \bar \sigma(t) \\ \bar q(t) \end{bmatrix} \nonumber \\ 
&= -\begin{bmatrix} \begin{bmatrix} \mathcal{O} & 0 \end{bmatrix} \\ \bar E^T(\Gamma)P^T \end{bmatrix} \begin{bmatrix}\begin{bmatrix} \mathcal{O}^T \\ 0^T \end{bmatrix} P\bar E(\Gamma) \end{bmatrix} \begin{bmatrix} \bar \sigma(t) \\ \bar q(t) \end{bmatrix}.
\end{align}
%
Here, $\begin{bmatrix} \bar E_0(\Gamma)^T & \bar E_f(\Gamma)^T\end{bmatrix}^T = P\bar E(\Gamma)$.  We refer to \eqref{error_dynamics_orbit} as the \emph{orbit error system}. For notational simplicity we have dropped the explicit dependence of the orbit rigidity matrix on $\mathcal G_0$ and $p_0(t)$. We also can characterize its equilibria by the set
\begin{align}\label{orbit_Error_eq} \mathcal X_0 = \left\{(\bar \sigma, \bar q) \, | \, \mathcal{O}^T\bar \sigma + \bar E_0(\Gamma) \bar q = 0 \text{ and } \bar E_f(\Gamma) \bar q =0\right\}.
\end{align}
}}
Our first result shows that the set $\mathcal X_0$ is asymptotically stable.
%
\begin{theorem}\label{thm_orbiterrordynamics}
    Consider the error system \eqref{error_dynamics_orbit} for a $\tau(\Gamma)$-symmetric framework satisfying Assumption \ref{ass.free}.  The set of equilibria \eqref{orbit_Error_eq} is asymptotically stable. Furthermore, on the set $\mathcal X_0$, the control $u(t) = 0$ and the configuration vector $p(t)$ converges to a fixed point under the dynamics \eqref{orbit_symformation}.
\end{theorem}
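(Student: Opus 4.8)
The closed loop \eqref{orbit_symformation} is the negative gradient flow $\dot p=-\nabla F(p)$ of the polynomial potential $F=F_e+F_s$ from \eqref{sympotential2}, so the plan is to run a LaSalle energy argument and then upgrade convergence-to-a-set into convergence-to-a-point via a {\L}ojasiewicz gradient inequality. The first step is to record the algebraic structure behind \eqref{error_dynamics_orbit}: with $N=\begin{bmatrix}\mathcal O & 0\\ \bar E_0(\Gamma)^T & \bar E_f(\Gamma)^T\end{bmatrix}$ the error dynamics read $\dot{\bar e}=-N N^T\bar e$, while the right-hand side of \eqref{orbit_symformation} (which is $\dot{\tilde p}$) equals $-N^T\bar e$; hence $u(t)=-N^T\bar e(t)$, and since the two equations defining $\mathcal X_0$ in \eqref{orbit_Error_eq} say exactly $N^T\bar e=0$, we get $u\equiv 0$ on $\mathcal X_0$, which is the second assertion. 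Taking $V=\|\bar e\|^2$ (equivalently $F$, which is a positive combination of $\|\bar\sigma\|^2$ and $\|\bar q\|^2$), one finds $\dot V=-2\|N^T\bar e\|^2=-2\|u\|^2\le 0$, with equality if and only if $\bar e\in\mathcal X_0$.

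Next I would apply LaSalle's invariance principle. Since $V$ is non-increasing, $\bar\sigma(t)$ and $\bar q(t)$ stay bounded; using Assumption \ref{vertexorbit_subgraph} to rebuild each vertex orbit $\Gamma_i$ from its representative and the symmetry errors, boundedness of $p(t)$ reduces to boundedness of the representative sub-configuration $p_0(t)$, which I would confine using the invariant $z(t)=\sum_{v\in\V}\sum_{\gamma\in\Gamma}\tau(\gamma)p_v(t)$ together with the finite-energy estimate $\int_0^\infty\|\dot p\|^2\,dt=F(p(0))-\lim_t F(p(t))<\infty$ (the flow moves only in directions in which $F$ strictly varies, and $z$ pins the residual affine freedom). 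With the trajectory precompact, LaSalle gives that $\omega(p(0))$ lies in the largest invariant subset of $\{\dot V=0\}=\{p:u(p)=0\}$; being a set of equilibria, this subset equals itself, so $\bar e(t)\to\mathcal X_0$ and $u(t)\to 0$. Lyapunov stability of $\mathcal X_0$ then follows from $\dot V\le 0$ together with $V\ge c_0\|\bar e\|^2$, so $\mathcal X_0$ is asymptotically stable.

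For the last claim, that $p(t)$ converges to a single fixed point, observe that convergence to the equilibrium set alone is not enough: $\mathcal X_0$ is positive-dimensional (it contains, e.g., all translates of a target symmetric formation along a point-group axis). Here I would invoke that $F$ is real-analytic and $\dot p=-\nabla F(p)$: by the {\L}ojasiewicz gradient inequality, every precompact orbit of an analytic gradient flow has finite length, $\int_0^\infty\|\dot p(t)\|\,dt<\infty$, so $p(t)$ converges to a single critical point of $F$, which is an equilibrium of \eqref{orbit_symformation}, i.e.\ a point of $\mathcal X_0$. The hard part will be the boundedness step just sketched: $F$ is not radially unbounded — it is flat along point-group translations of a symmetric configuration — so one cannot simply take compact sublevel sets, and it is the combination of the conserved quantity $z(t)$ with the $L^2$ displacement bound that makes the argument close; for a purely local statement one may instead restrict to a small sublevel set of $F$ around the target, quotient out the point-group translations, and repeat. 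A second point of care is that $\mathcal X_0$ is itself unbounded, so ``asymptotic stability of $\mathcal X_0$'' should be read relative to a metric on the closure of the reachable set, where $V\ge c_0\|\bar e\|^2$ and $\dot V\le 0$ deliver both stability and attractivity.
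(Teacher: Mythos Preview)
Your core argument coincides with the paper's: the quadratic Lyapunov function $V=\tfrac12\|\bar e\|^2$, the computation $\dot V=-\|N^T\bar e\|^2\le 0$, the identification $u=-N^T\bar e$ so that $\mathcal X_0=\ker N^T=\{u=0\}$, and LaSalle for attractivity of $\mathcal X_0$. The paper carries this out entirely in the error coordinates---it takes the sublevel sets $\{\bar e:V(\bar e)\le\rho\}$ as compact positively invariant sets, applies LaSalle there, and then closes with the single line ``since $u(t)\to 0$, $p(t)$ must converge to a constant value.''

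Where you go beyond the paper is in the two technical points you flag. For convergence of $p(t)$ to a \emph{single} point you invoke the {\L}ojasiewicz gradient inequality, which is a genuinely stronger tool than the paper's one-liner (the implication $\dot p\to 0\Rightarrow p$ converges is false in general), so this is a useful upgrade. On the other hand, your boundedness-of-$p$ step---needed both to justify LaSalle for the non-autonomous $\bar e$-dynamics and to feed precompactness into {\L}ojasiewicz---does not close as written: the conserved quantity $z(t)$ is a single vector in $\mathbb{R}^d$ and cannot by itself confine $p_0$, and an $L^2$ bound on $\dot p$ does not yield an $L^\infty$ bound on $p$. The paper simply sidesteps this issue by working only in $\bar e$; if you want a complete argument you will need a different mechanism (or, as you hint at the end, restrict to a local statement in a neighbourhood of the target configuration).
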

\begin{proof}
     We define the Lyapunov function
    $$V(\bar e(t))= \frac{1}{2}\begin{bmatrix} \bar \sigma(t)^T & \bar q(t)^T\end{bmatrix} \begin{bmatrix} \bar \sigma(t) \\ \bar q(t) \end{bmatrix}=\frac{1}{2}\bar e(t)^T \bar e(t).$$
    The derivative of $V$ along the trajectories of \eqref{error_dynamics_orbit} is
    \begin{align*}
        \dot{V}(\bar e(t)) &= -\bar e(t)^T\underbrace{\begin{bmatrix} \begin{bmatrix} \mathcal{O} & 0 \end{bmatrix} \\ \bar E^T(\Gamma) P^T\end{bmatrix} \begin{bmatrix}\begin{bmatrix} \mathcal{O}^T \\ 0^T \end{bmatrix} P\bar E(\Gamma) \end{bmatrix}}_{\mathcal M}\bar e(t)\\
        &= -\left\|\begin{bmatrix}\begin{bmatrix} \mathcal{O}^T \\ 0^T \end{bmatrix} P\bar E(\Gamma) \end{bmatrix}\bar e(t)\right\|^2  \leq 0.
    \end{align*}  
    Note that for any $\rho>0$ the set $\Psi(\rho) = \{\bar e\,:\,V(\bar e)\leq \rho\}$ such that $\bar e(0) \in \Psi(\rho)$ is compact and positively invariant with respect to \eqref{error_dynamics_orbit}.  Therefore, by LaSalle's Theorem, every solution starting in $\Psi(\rho)$ must approach the largest invariant set where $\dot V(\bar e) = 0$, which is precisely the set $\mathcal X_0$.

    Now, observe from \eqref{orbit_symformation} that the control can be expressed as 
    $$u(t) = -\begin{bmatrix} \mathcal O^T  \\ 0 \end{bmatrix} \bar \sigma(t)- P\bar E(\Gamma ) \bar q.$$
    Therefore, on the set $\mathcal X_0$ it follows that $u(t) \equiv 0$.  Since $u(t)\to 0$ it follows that $p(t)$ must converge to a constant value.
\end{proof}

Theorem \ref{thm_orbiterrordynamics} guarantees that both the orbit error dynamics and the formation dynamics behave nicely.  However, the set $\mathcal X_0$ itself may be difficult to characterize, and Theorem \ref{thm_orbiterrordynamics} does not guarantee, for example, convergence to the correct symmetric formation shape.  Analogous to classic results from formation control (see \cite{Sun2016}), by imposing additional assumptions on the framework we can show that in fact the error dynamics locally converge exponentially fast to the origin. 

\begin{theorem}\label{thm.orbitsym_main}
    Let $p^\star$ be the target formation satisfying conditions (i) and (ii) of Problem \ref{problem_symaquisition}, and assume that $(\G, p^\star)$ is a $\tau(\Gamma)$-symmetric isostatic framework satisfying Assumption \ref{ass.free}.  Furthermore, assume that Assumption \ref{vertexorbit_subgraph} holds and that the matrices $E(\Gamma_i)$ are constructed using only a spanning tree subgraph of $\G(\Gamma_i)$.  Then the origin is a locally exponentially stable equilibrium of \eqref{error_dynamics_orbit}.
\end{theorem}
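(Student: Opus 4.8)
The plan is to exploit the factored, quadratic form of the orbit error dynamics \eqref{error_dynamics_orbit}: they read $\dot{\bar e}(t)=-\mathcal{M}(p_0(t))\,\bar e(t)$, where $\mathcal{M}(p_0)=N(p_0)^T N(p_0)$ is symmetric positive semidefinite and depends on the state only through the representative coordinates $p_0$ (here $N(p_0)$ is the right factor appearing in \eqref{error_dynamics_orbit}, assembled from $\mathcal{O}(\G_0,p_0)$ and $P\bar E(\Gamma)$). The idea is to show that under the stated hypotheses $\mathcal{M}(p_0^\star)$ is in fact positive \emph{definite}, and then to combine this with the Lyapunov function $V(\bar e)=\tfrac12\|\bar e\|^2$ from Theorem~\ref{thm_orbiterrordynamics} and a short bootstrap that keeps $p_0(t)$ inside the region where $\mathcal{M}(p_0)\succeq cI$. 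Observe first that $\bar e=0$ corresponds precisely to $p=p^\star$: conditions (i) and (ii) of Problem~\ref{problem_symaquisition} give $\bar\sigma=0$ and $\bar q=\bar E(\Gamma)^T P^T p^\star=0$, so $0$ is indeed an equilibrium of \eqref{error_dynamics_orbit}.

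The core step is the claim $\ker\mathcal{M}(p_0^\star)=\{0\}$. Since $\mathcal{M}(p_0^\star)=N(p_0^\star)^T N(p_0^\star)$, its kernel equals $\ker N(p_0^\star)$, which by \eqref{error_dynamics_orbit} is exactly the equilibrium set $\mathcal{X}_0$ of \eqref{orbit_Error_eq}, namely the pairs $(\bar\sigma,\bar q)$ with $\mathcal{O}^T\bar\sigma+\bar E_0(\Gamma)\bar q=0$ and $\bar E_f(\Gamma)\bar q=0$. I would dispatch this in two steps. First, because $\bar E(\Gamma)=\mathrm{diag}\{E(\Gamma_i)\}$ is assembled from \emph{spanning trees} of the induced subgraphs $\G(\Gamma_i)$, and the follower coordinates are exactly the non-representative vertices of the orbits, the block $\bar E_f(\Gamma)$ is block diagonal over the orbits and, after ordering each tree by distance from its representative (taken as the root), is block triangular with the orthogonal matrices $\pm\tau(\gamma)$ on the diagonal; hence $\bar E_f(\Gamma)$ is invertible and $\bar E_f(\Gamma)\bar q=0$ forces $\bar q=0$. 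It then remains that $\mathcal{O}^T\bar\sigma=0$; since $(\G,p^\star)$ is $\tau(\Gamma)$-symmetric isostatic it is in particular $\tau(\Gamma)$-symmetric independent, so Corollary~\ref{cor.orbitisostatic} yields that $\mathcal{O}(\G_0,\bar p^\star)$ has full row rank, i.e.\ $\mathcal{O}^T$ is injective, whence $\bar\sigma=0$. Therefore $\mathcal{M}(p_0^\star)\succ0$.

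Positive definiteness is an open condition and the entries of $\mathcal{O}(\G_0,p_0)$ are polynomial in $p_0$, so there are a neighborhood $\mathcal{N}$ of $p_0^\star$ and a constant $c>0$ with $\mathcal{M}(p_0)\succeq cI$ for all $p_0\in\mathcal{N}$. To close the argument I would use a continuity/maximality bootstrap. On any time interval on which $p_0(t)\in\mathcal{N}$ one has $\dot V(\bar e(t))=-\bar e(t)^T\mathcal{M}(p_0(t))\bar e(t)\le-2c\,V(\bar e(t))$, hence $\|\bar e(t)\|\le\|\bar e(0)\|\,e^{-ct}$. At the same time, reading $\dot p_0$ off of \eqref{orbit_symformation} shows it is a linear function of $\bar e=(\bar\sigma,\bar q)$ whose coefficients are bounded on $\overline{\mathcal{N}}$, so $\|\dot p_0(t)\|\le C\|\bar e(t)\|$ and consequently $\|p_0(t)-p_0^\star\|\le\|p_0(0)-p_0^\star\|+(C/c)\|\bar e(0)\|$. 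Since $\bar e$ and $p_0$ depend continuously on $p$ and attain their target values $0$ and $p_0^\star$ at $p=p^\star$, choosing $p(0)$ sufficiently close to $p^\star$ makes this bound strictly smaller than the radius of $\mathcal{N}$; then $p_0(t)$ can never leave $\mathcal{N}$, the exponential estimate on $\|\bar e(t)\|$ holds for all $t\ge0$, and the origin of \eqref{error_dynamics_orbit} is locally exponentially stable.

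The main obstacle is the positive-definiteness step, where both hypotheses are used in an essential way: the spanning-tree construction is what makes $\bar E_f(\Gamma)$ invertible, so that the symmetry-forcing part of $\mathcal{M}$ pins down $\bar q$, while the isostatic assumption (through Corollary~\ref{cor.orbitisostatic}) is what makes $\mathcal{O}(\G_0,\bar p^\star)$ of full row rank, so that the orbit-rigidity part pins down $\bar\sigma$; drop either and $\mathcal{X}_0$ is nontrivial, leaving only the weaker convergence statement of Theorem~\ref{thm_orbiterrordynamics}. A secondary subtlety is that \eqref{error_dynamics_orbit} is not autonomous in $\bar e$ alone, since its coefficient $\mathcal{M}(p_0(t))$ depends on $p_0$, which is not a function of $\bar e$ because $p^\star$ lies on a manifold of equilibria; the bootstrap above handles this, the enabling observation being that $\dot p_0$ is itself $O(\|\bar e\|)$, so $p_0$ cannot drift out of $\mathcal{N}$ while $\bar e$ decays.
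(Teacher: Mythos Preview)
Your proof is correct and follows essentially the same approach as the paper: both show that $\mathcal{M}$ is positive definite near the target by combining full row rank of $\mathcal{O}$ (isostaticity via Corollary~\ref{cor.orbitisostatic}) with full column rank/invertibility of $\bar E_f(\Gamma)$ (spanning-tree construction), and then conclude exponential stability from $\dot V\le -2cV$ with the Lyapunov function $V=\tfrac12\|\bar e\|^2$. Your bootstrap handling the fact that $\mathcal{M}$ depends on $p_0$ rather than on $\bar e$ is actually more careful than the paper's own treatment, which tacitly identifies the sublevel set $\Psi(\rho)$ in error coordinates with a region on which the framework remains isostatic.
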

%
\begin{proof}
    Let $\Psi(\rho) = \{\bar e \,:\, V(\bar e) \leq \rho\}$ for a sufficiently small $\rho$ such that for all points in $\Psi(\rho)$ the formation is $\tau(\Gamma)$-symmetric isostatic.  By Corollary \ref{cor.orbitisostatic}, the orbit rigidity matrix must have full row-rank on this set.  We now consider the set $\mathcal X_0$ in \eqref{orbit_Error_eq}.  Note that by Assumption \ref{vertexorbit_subgraph}, the matrices $E(\Gamma_i)$ have full column rank, and in particular, so must $\bar E_f(\Gamma)$. Therefore, $\bar q = 0$.  Similarly, since $\mathcal O^T$ also has full column rank in $\Psi(\rho)$, it follows that $\bar \sigma = 0$ and we conclude that $\mathcal X_0 = \{0\}$.

   We conclude the proof using the same Lyapunov function from the proof of Theorem \ref{thm_orbiterrordynamics}. Observe that due to the assumptions of the corollary, $\lambda_{min}  = \min_{e \in \Psi(\rho)} \lambda(\mathcal{M})>0$.  Since $\Psi(\rho)$ is compact, the existence of $\lambda_{min}$ is guaranteed.  Then one has
    $$\dot V(\bar e(t)) \leq -\lambda_{min} \|\bar e(t)\|^2 = - 2\lambda_{min} V(\bar e(t)),$$
which indicates that $V(e(t))$ is negative definite for $e(t)\in \Psi(\rho) \setminus \{0\}$. Thus the exponential stability of the equilibrium $e = 0$ in the error system \eqref{error_dynamics_orbit} is proved.
\end{proof}

{\color{black}{
In summary, minimally forced-symmetric rigidity provides an architecture for Problem \ref{problem_symaquisition} that ensures (local) exponential stability to the desired symmetric formation shape.  As in rigidity-based formation control strategies, this result is 
local since we must still be concerned with flip ambiguities of the framework.
We also note that the control \eqref{orbit_symformation} requires
fewer edges than the ordinary rigidity-based control.
\begin{theorem}\label{thm.number_of_edges}
In the setting of Theorem~\ref{thm.orbitsym_main}, 
the control \eqref{orbit_symformation} uses at most $(1+1/|\Gamma|)|{\cal V}|$ edges.
\end{theorem}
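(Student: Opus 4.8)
The plan is a direct edge-counting argument, made transparent by the state-space form \eqref{orbit_symformation}. \textbf{Step 1 (which edges the controller reads).} In \eqref{orbit_symformation} the only data carrying edge information are the orbit rigidity matrix $\mathcal{O}(\mathcal{G}_0,p_0(t))$ together with the target vector $\mathrm{\bf d}_0$, and the matrix $Q$. The first pair depends only on the quotient gain graph $\mathcal{G}_0=(\mathcal{V}_0,\mathcal{E}_0)$, so it uses exactly the $|\mathcal{E}_0|$ edge-orbit representatives; equivalently, at a representative agent $i\in\mathcal{V}_0$ the terms $u_i^{(a)},u_i^{(b)}$ of \eqref{repnode_dynamics} range over the edge-orbit representatives having $i$ as tail, and each contributes a single physical edge of $\mathcal{G}$. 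Under the hypotheses of Theorem~\ref{thm.orbitsym_main}, $Q=\bar E(\Gamma)\bar E(\Gamma)^T$ is assembled from a spanning tree of each induced subgraph $\mathcal{G}(\Gamma_i)$, i.e. from $|\Gamma_i|-1$ edges per vertex orbit; equivalently $u_i^{(c)}$ and the follower dynamics \eqref{folnode_dynamics} use only those spanning-tree edges inside a vertex orbit. Hence the number of edges of $\mathcal{G}$ used by the control is at most
$$|\mathcal{E}_0|\;+\;\sum_{i\in\mathcal{V}_0}\bigl(|\Gamma_i|-1\bigr),$$
the inequality accounting for the possibility that a spanning-tree edge inside some $\Gamma_i$ is also an edge-orbit representative (a loop of $\mathcal{G}_0$) and so is counted twice.

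\textbf{Step 2 (freeness).} By Assumption~\ref{ass.free} the action of $\Gamma$ on $\mathcal{V}$ is free, so every stabilizer is trivial and, by the orbit--stabilizer theorem, every vertex orbit has size exactly $|\Gamma|$. Therefore $|\mathcal{V}|=|\Gamma|\,|\mathcal{V}_0|$ and $\sum_{i\in\mathcal{V}_0}\bigl(|\Gamma_i|-1\bigr)=(|\Gamma|-1)\,|\mathcal{V}_0|$.

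\textbf{Step 3 (bounding $|\mathcal{E}_0|$ and conclusion).} Since $(\mathcal{G},p^\star)$ is $\tau(\Gamma)$-symmetric isostatic it is, in particular, $\tau(\Gamma)$-symmetric independent, so Corollary~\ref{cor.orbitisostatic} gives that its orbit rigidity matrix has full row rank; as this matrix has $|\mathcal{E}_0|$ rows and, in the plane, $2|\mathcal{V}_0|$ columns, we obtain $|\mathcal{E}_0|\le 2|\mathcal{V}_0|$. Combining the three steps,
$$|\mathcal{E}_0|+\sum_{i\in\mathcal{V}_0}\bigl(|\Gamma_i|-1\bigr)\;\le\;2|\mathcal{V}_0|+(|\Gamma|-1)|\mathcal{V}_0|\;=\;(|\Gamma|+1)\,|\mathcal{V}_0|\;=\;\Bigl(1+\tfrac{1}{|\Gamma|}\Bigr)|\mathcal{V}|,$$
which is the claim. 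I do not anticipate a genuine obstacle here; the only points needing care are the precise bookkeeping of which physical edges of $\mathcal{G}$ the controller actually senses -- which is routine once everything is read off \eqref{orbit_symformation} and \eqref{repnode_dynamics}--\eqref{folnode_dynamics} -- and the observation that the estimate $|\mathcal{E}_0|\le 2|\mathcal{V}_0|$ is exactly where planarity ($d=2$) enters, through the column count of the orbit rigidity matrix.
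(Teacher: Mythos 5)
Your proof is correct and follows essentially the same route as the paper: count $|\mathcal{E}_0|+(|\Gamma|-1)|\mathcal{V}_0|$ edges used by the control, bound $|\mathcal{E}_0|\le 2|\mathcal{V}_0|$ via the isostatic hypothesis, and use freeness to get $|\mathcal{V}|=|\Gamma|\,|\mathcal{V}_0|$. The only (harmless) difference is in justifying $|\mathcal{E}_0|\le 2|\mathcal{V}_0|$: the paper cites the known symmetric Maxwell-type count for $\tau(\Gamma)$-symmetric isostatic frameworks, whereas you derive it internally from Corollary~\ref{cor.orbitisostatic}, since full row rank of the $|\mathcal{E}_0|\times 2|\mathcal{V}_0|$ orbit rigidity matrix forces rows $\le$ columns --- a valid and self-contained alternative.
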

\begin{proof}
Let ${\cal G}_0=({\cal V}_0, {\cal E}_0)$ be the quotient graph as used in the description of the control.
By the assumption that the vertex set of each orbit induces a tree in the communication graph,
the control uses 
$|{\cal E}_0|+(|\Gamma|-1)|{\cal V}_0|$ edges.
It is known that, if $({\cal G},p^{\ast})$ is a $\tau(\Gamma)$-symmetric isostatic framework, then $|{\cal E}_0|\leq 2|{\cal V}_0|$, see, e.g., \cite[Theorem 62.1.4]{Bernd2017sym}.
This relation is the symmetric analogue of the well known fact that 
any isostatic framework has $2|{\cal V}|-3$ edges. Hence, the number of edges in the control is bounded by $(|\Gamma|+1)|{\cal V}_0|$. 
By the assumption that $\Gamma$ acts freely on ${\cal V}$, every vertex orbit is of size $|\Gamma|$, and hence  $|{\cal V}|=|\Gamma||{\cal V}_0|$. Thus,
 $(|\Gamma|+1)|{\cal V}_0|=(1+1/|\Gamma|)|{\cal V}|$.
\end{proof}
The bound in Theorem~\ref{thm.number_of_edges}  is significantly smaller than that required for stabilizing infinitesimally rigid frameworks without any additional symmetry constraints (which requires at least $2|\V|-3$ edges).

It should also be noted that, 
if a $\tau(\Gamma)$-symmetric framework $({\cal G},p^{\ast})$ is infinitesimally rigid in the ordinary sense (that is, the framework has a rigidity matrix with rank $2|{\cal V}|-3$), 
then one can always extract a subgraph ${\cal H}$
such that $({\cal H},p^{\ast})$ is $\tau(\Gamma)$-symmetric isostatic.
Hence, as long as Assumption~\ref{vertexorbit_subgraph} holds,
our control can be applied to a subgraph of ${\cal G}$ with $|{\cal E}_0|+(|\Gamma|-1)|{\cal V}_0|$ edges.

\begin{example}\label{ex_thm3}
We now consider the graph on $n=10$ nodes shown in Figure \ref{fig.rotsym1}.  This is a $\tau(\Gamma)$-symmetric framework with $\Gamma$ corresponding to the rotational group, where $\tau(\gamma)$ is the rotation matrix of $2\pi/5$ radians.  Note that this graph has $15$ edges.  To solve the formation control problem using the standard approach in \eqref{formation_control}, we would require an additional $2$ edges to ensure minimal infinitesimal rigidity of the framework (17 total).  On the other hand, the control strategy in \eqref{orbit_symformation} requires only $9$ edges, with the subgraph shown in Figure \ref{fig.rotsym2}.  The quotient graph for this framework is shown in Figure \ref{fig.rotsym3}.  Figure \ref{fig.rotsym4} shows the resulting trajectories when running \eqref{orbit_symformation}.
\end{example}
\vspace{-.5cm}
\textcolor{black}{
\begin{remark}
    One well-known challenge in formation controllers of the form \eqref{formation_control} is to avoid the invariant sub-space of co-linear solutions \cite{Krick2009}.  While the symmetry-forced formation control \eqref{orbit_symformation} is not immune to this problem, it depends greatly on the chosen symmetry.  For example, rotational symmetries, as in Example \ref{ex_thm3}, do not have co-linear solutions as an invariant set, while a reflection symmetry only has co-linear invariant solutions that are orthogonal to the mirror axis.
\end{remark}
}
\begin{figure*}[!t]
\centering
\subfigure[A $\tau(\Gamma)$-symmetric isostatic framework.]
	{\includestandalone[width=.18\textwidth]{forced_sym_ex1}\label{fig.rotsym1}}
 \subfigure[Subgraph used to implement control strategy.]
	{\includestandalone[width=.18\textwidth]{forced_sym_ex}\label{fig.rotsym2}}
 \subfigure[Quotient graph.]
   {\includestandalone[scale=.75]{forced_sym_quotient_ex}\label{fig.rotsym3}}
 \subfigure[Trajectories generated from \eqref{orbit_symformation}.]
{\label{fig:ex_orbit1}\includegraphics[width=0.45\columnwidth]{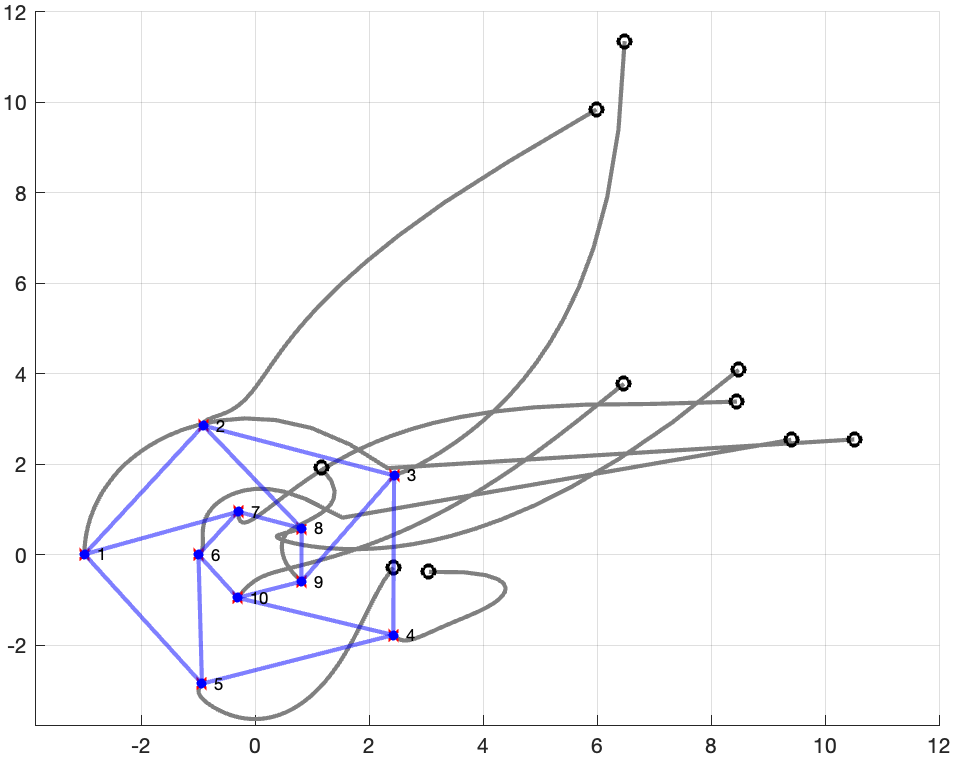}\label{fig.rotsym4}}   \subfigure[Trajectories with centroid consensus term, \eqref{orbit_symformation_consensus}, \eqref{consensus}.]
{\label{fig:ex_orbit2}\includegraphics[width=0.45\columnwidth]{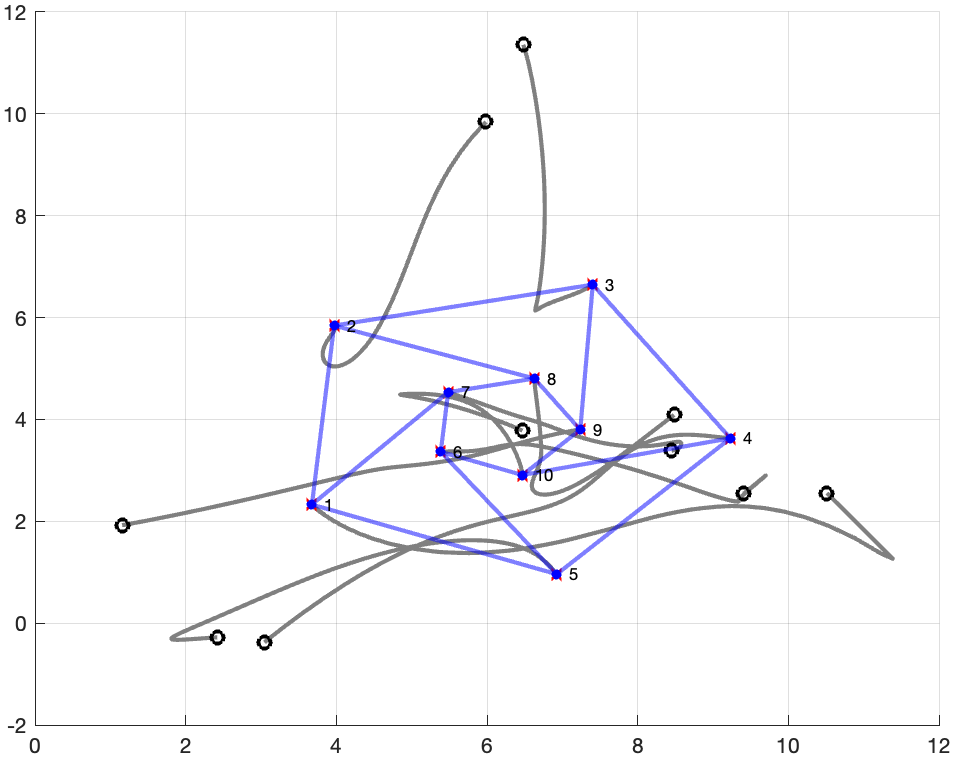}
  }
		\caption{Results from Example \ref{ex_thm3}.  We consider the $\tau(\Gamma)$-symmetric framework in (a) with the $2\pi/5$ rotational symmetry.  The framework has two vertex orbits, indicated by the blue and green nodes in (b), along with three edges in the edge orbit (red edges in (b)).  For the implementation of \eqref{orbit_symformation} we only require a spanning tree subgraph from each $\G(\Gamma_i)$ and the edges in $\E_0$, shown in (b).  Figure (c) shows the corresponding quotient gain graph.  Figures (d) and (e) show the resulting trajectories without and with the additional consensus term, respectively.}\label{fig:specialsym}
\end{figure*}
Of note is that  the symmetries used to define the $\tau(\Gamma)$-symmetric framework are defined with respect to a common inertial frame (see Fig. \ref{fig.rotsym4}).  In the sequel we propose a modification to \eqref{orbit_Error_eq} to relax this point.

\subsection{Symmetry forced formations with centroid consensus}
The $\tau(\Gamma)$-symmetric frameworks by definition have the point-group symmetries defined with respect to some fixed inertial point (the origin).  As seen in Figure \ref{fig:ex_orbit1}, for agent initial conditions that are far from the origin, the control strategy in \eqref{orbit_symformation} will drive the agent to the correct formation, but with respect to the origin.  In fact, it would be more desirable for the agents to be able to arrange themselves to the correct symmetric formation with respect to \emph{any} point.  This is further a necessary requirement if we want to include formation maneuvering as well.

In this direction, we propose the addition of a consensus term that will allow the agents to distributedly agree on a different origin that is a function of the initial conditions. Each agent will augment its state-vector with the centroid estimate, $r_i(t) \in \mathbb{R}^d$.  The classic consensus algorithm is then run on these virtual states,
\begin{align}\label{consensus}
\hspace{-6pt}\dot r_i(t) = \sum_{ij \in \E} (r_j(t)-r_i(t)) \Leftrightarrow \dot r(t) = -(L(\G) \otimes I_d)r(t),\end{align}
where $L(\G)$ is the combinatorial graph Laplacian matrix of $\G$.  It is well-known that under the assumption that $\G$ is connected, we have \cite{Mesbahi2010} 
\begin{equation}\label{consensus_centroid}r(t) \to \mathbf{r}^\star =\mathbbm{1}_{|\V|} \otimes \left((1/|\V|) \mathbbm (\mathbbm 1^T\otimes I_d)r(0)\right).\end{equation}  We now are able to use the virtual state $r_i(t)$ for each agent to effectively shift the origin in the control \eqref{orbit_symformation}.

We  define the shifted state $\bar c(t) = \begin{bmatrix} c_0^T(t) & c_f^T(t) \end{bmatrix}^T=P(p(t)-r(t))$.  The modified control, together with \eqref{consensus}, then takes the form 
\begin{align}\label{orbit_symformation_consensus}
    \begin{bmatrix} \dot{p}_0(t) \\ \dot{p}_f(t) \end{bmatrix} &= \begin{bmatrix} -\mathcal{O}^T(\mathcal{G}_0,c_0(t))\bigg(\mathcal{O}(\mathcal{G}_0,c_0(t))c_0(t) - \mathrm{\bf d}_0^2 \bigg) \\0\end{bmatrix} \nonumber\\&+ PQP^T\begin{bmatrix}c_0(t) \\ c_f(t)\end{bmatrix}.
\end{align}
The combined dynamics \eqref{consensus} and \eqref{orbit_symformation_consensus} are in a cascade form. 
Following the same approach as before, we define the error signals
$$\bar{\sigma}(t) = \mathcal{O}(\mathcal{G}_0,c_0(t))c_0(t) - \mathrm{\bf d}_0^2\text{, and }\bar q(t)=\bar E(\Gamma)^TP^T\bar c(t).$$
The error dynamics can now be expressed as
{\footnotesize \begin{align}\label{consensus_error}
\begin{bmatrix} \dot{\bar \sigma }(t) \\ \dot{\bar q}(t) \end{bmatrix} &=-\begin{bmatrix} \mathcal{O}(\G_0, c_0(t))\mathcal{O}^T(\G_0,c_0(t)) & \mathcal{O}(\G_0,c_0(t)) \bar E_0(\Gamma) \\ \bar E_0^T(\Gamma)\mathcal{O}^T(\G_0,c_0(t)) & \bar E^T(\Gamma)\bar E(\Gamma) \end{bmatrix} \begin{bmatrix}  {\bar \sigma }(t) \\ {\bar q}(t) \end{bmatrix}\nonumber \\
& + \begin{bmatrix} \begin{bmatrix} \mathcal O(\mathcal G_0,c_0(t)) & 0 \end{bmatrix}P(L(\G) \otimes I_d) \\ \bar E(\Gamma)(L(\G) \otimes I_d) \end{bmatrix}  r(t)
\end{align}}

To assist in our analysis of the cascade system \eqref{consensus_error} and \eqref{consensus}, we perform a simple change of coordinates.  Let $\hat p(t) = p(t) - {\mathbf r}^\star$, $\hat r(t) = r(t) - {\mathbf r}^\star$. With this definition, note that $\hat c(t) = \hat p(t)-\hat r(t) = c(t)$, similarly for the error signals $\bar \sigma(t)$ and $\bar q(t)$ (i.e., $\hat \sigma (t) = \bar \sigma(t)$ and $\hat q(t) = \bar q (t)$).  With this definition, the shifted cascade system can be represented as
{\footnotesize{\begin{align}
    \begin{bmatrix} \dot{\hat \sigma }(t) \\ \dot{\hat q}(t) \end{bmatrix} &=-\begin{bmatrix} \mathcal{O}(\G_0, \hat c_0(t))\mathcal{O}^T(\G_0,\hat c_0(t)) & \mathcal{O}(\G_0,\hat c_0(t)) \bar E_0(\Gamma) \\ \bar E_0^T(\Gamma)\mathcal{O}^T(\G_0,\hat c_0(t)) & \bar E^T(\Gamma)\bar E(\Gamma) \end{bmatrix} \begin{bmatrix}  {\hat \sigma }(t) \\ {\hat q}(t) \end{bmatrix}\nonumber \\
& + \begin{bmatrix} \begin{bmatrix} \mathcal O(\mathcal G_0,\hat c_0(t)) & 0 \end{bmatrix}P(L(\G) \otimes I_d) \\ \bar E(\Gamma)(L(\G) \otimes I_d) \end{bmatrix}  \hat r(t) \label{shiftend_consensus_error1} \\
\dot{\hat r}(t) &= -(L(\G)\otimes I_d)\hat r(t). \label{shiftend_consensus_error2}
\end{align}}}

We now will show that the cascade system \eqref{shiftend_consensus_error1} and \eqref{shiftend_consensus_error2} is locally exponentially stable.  To do so, we must first show that these dynamics are locally input-to-state stable \cite{Sontag1996}.
\begin{theorem}\label{prop.cascadeLISS}
 Let $p^\star$ be the target formation such that $p^\star $ satisfies conditions (i) and (ii) of Problem \ref{problem_symaquisition}, and assume that $(\G, p^\star)$ is a $\tau(\Gamma)$-symmetric isostatic framework satisfying Assumption \ref{ass.free}. Assume that Assumption \ref{vertexorbit_subgraph} holds and that the matrices $E(\Gamma_i)$ are constructed using only a spanning tree subgraph of $\G(\Gamma_i)$.  Then the cascade system \eqref{shiftend_consensus_error1} and \eqref{shiftend_consensus_error2} is locally exponentially stable.  

\end{theorem}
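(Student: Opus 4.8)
The plan is to exploit the cascade structure of \eqref{shiftend_consensus_error1}--\eqref{shiftend_consensus_error2}: the $\hat r$-subsystem \eqref{shiftend_consensus_error2} is autonomous and enters the $(\hat\sigma,\hat q)$-subsystem \eqref{shiftend_consensus_error1} only as an exogenous input, and the $(\hat\sigma,\hat q)$-dynamics obtained by setting $\hat r\equiv 0$ coincide with the orbit error system \eqref{error_dynamics_orbit}, which Theorem~\ref{thm.orbitsym_main} shows is locally exponentially stable under precisely the present hypotheses. So I would establish three facts and then assemble them via the classical cascade result. As a first (easy) step, the driving subsystem is globally exponentially stable: since $\G$ is connected, $L(\G)\otimes I_d\succeq 0$ with kernel $\mathrm{span}(\mathbbm 1_{|\V|}\otimes I_d)$, and by \eqref{consensus}--\eqref{consensus_centroid} the initial error $\hat r(0)=r(0)-\mathbf r^\star$ is the orthogonal projection of $r(0)$ onto the complement of this kernel, a subspace that is invariant under \eqref{shiftend_consensus_error2}; on it $-(L(\G)\otimes I_d)$ is Hurwitz with spectral abscissa $-\lambda_2(\G)<0$, so $\|\hat r(t)\|\le e^{-\lambda_2(\G)t}\|\hat r(0)\|$ \cite{Mesbahi2010}.

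The core of the argument is showing that the driven subsystem \eqref{shiftend_consensus_error1}, with $\hat r$ viewed as an input, is locally input-to-state stable. Because its unforced ($\hat r\equiv 0$) dynamics are the orbit error system \eqref{error_dynamics_orbit}, whose origin is an isolated and locally exponentially stable equilibrium by Theorem~\ref{thm.orbitsym_main}, a standard converse Lyapunov argument for exponentially stable equilibria supplies a function $W(\hat\sigma,\hat q)$ and constants $c_1,\dots,c_4>0$ with $c_1\|(\hat\sigma,\hat q)\|^2\le W\le c_2\|(\hat\sigma,\hat q)\|^2$, $\dot W\le -c_3\|(\hat\sigma,\hat q)\|^2$ along the unforced flow, and $\|\nabla W\|\le c_4\|(\hat\sigma,\hat q)\|$. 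Differentiating $W$ along the full dynamics \eqref{shiftend_consensus_error1} adds the term $\nabla W^\top B(\hat c_0(t))\hat r$, where $B(\hat c_0)$ is the interconnection matrix multiplying $\hat r$. On a neighbourhood of the equilibrium — concretely the sublevel set $\Psi(\rho)$ used in the proofs of Theorems~\ref{thm_orbiterrordynamics} and \ref{thm.orbitsym_main}, intersected with $\{\|\hat r\|\le\rho'\}$, chosen small enough that $(\G,\hat c_0)$ stays $\tau(\Gamma)$-symmetric isostatic and hence, by Corollary~\ref{cor.orbitisostatic}, $\mathcal{O}(\G_0,\hat c_0)$ keeps full row rank — the matrices $\mathcal{O}(\G_0,\hat c_0)$, $\bar E(\Gamma)$ and $L(\G)$ are bounded, so $\|B(\hat c_0)\|\le L$ for a constant $L$. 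Young's inequality then gives $\dot W\le -\tfrac{c_3}{2}\|(\hat\sigma,\hat q)\|^2+\tfrac{(c_4 L)^2}{2c_3}\|\hat r\|^2\le -\tfrac{c_3}{2c_2}W+c_5\|\hat r\|^2$, a local ISS estimate with linear supply rate; a routine invariance argument (shrinking the admissible initial errors) ensures trajectories stay in the region where this estimate holds.

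To conclude, I would substitute the exponential bound on $\|\hat r\|$ into the $\dot W$ inequality and apply the comparison lemma, obtaining $W(t)\le e^{-\alpha t}W(0)+c_5\|\hat r(0)\|^2\int_0^t e^{-\alpha(t-s)}e^{-2\lambda_2(\G)s}\,ds$ with $\alpha=c_3/(2c_2)$; the right-hand side decays exponentially at rate $\min\{\alpha,2\lambda_2(\G)\}$ (with an extra linear-in-$t$ factor in the borderline case), and together with the exponential decay of $\hat r$ itself this yields $\|(\hat\sigma(t),\hat q(t),\hat r(t))\|\le c\,e^{-\beta t}\|(\hat\sigma(0),\hat q(0),\hat r(0))\|$ for all sufficiently small initial errors. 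This last step is exactly the classical fact that a locally ISS system driven by a globally exponentially stable autonomous system is locally exponentially stable \cite{Sontag1996}.

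The main obstacle is the second step. Two points need care: (a) as a system in $(\hat\sigma,\hat q)$, \eqref{shiftend_consensus_error1} is genuinely time-varying — its coefficients depend on the trajectory through $\hat c_0(t)$ — so "the unforced system is exponentially stable" must be read as a statement about the flow near the equilibrium, and the Lyapunov function is best taken as (or built from) the quadratic $V$ already used in the proof of Theorem~\ref{thm.orbitsym_main} rather than manufactured abstractly; and (b) bounding the interconnection term $B(\hat c_0)$ requires first confining the configuration to the neighbourhood where $\mathcal{O}(\G_0,\hat c_0)$ has full row rank, which is precisely where the isostatic hypothesis, Assumption~\ref{ass.free}, and the spanning-tree choice of the $E(\Gamma_i)$ under Assumption~\ref{vertexorbit_subgraph} are invoked, exactly as in the proof of Theorem~\ref{thm.orbitsym_main}. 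Once the region of validity and the constant $L$ are pinned down, the remaining estimates are routine.
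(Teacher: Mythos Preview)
Your proposal is correct and follows essentially the same route as the paper: show the driven subsystem is locally ISS in $\hat r$ (because with $\hat r\equiv 0$ it reduces to \eqref{error_dynamics_orbit}, locally exponentially stable by Theorem~\ref{thm.orbitsym_main}), note the consensus subsystem \eqref{shiftend_consensus_error2} is exponentially stable, and invoke the standard cascade result. The paper dispatches each step by citation (to \cite{Sontag1996}, \cite{Mesbahi2010}, and a cascade stability reference), whereas you unpack the converse-Lyapunov and comparison-lemma details explicitly; your discussion of the subtleties in bounding $B(\hat c_0)$ and handling the dependence on $\hat c_0(t)$ is more careful than what the paper spells out.
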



\begin{proof}
First we establish that \eqref{shiftend_consensus_error1} is locally input-to-state stable  while interpreting the signal $\hat r(t)$ as the input.
    When $\hat r(t) = 0$ we have that $\hat c_0(t) = \hat  p_0(t)$ and the dynamics \eqref{shiftend_consensus_error1} reduce to \eqref{error_dynamics_orbit}.  Using the same arguments as in Theorem \ref{thm.orbitsym_main}, this system is locally exponentially stable, which shows the system must be locally input-to-state stable \cite{Sontag1996}.  To establish that the cascade system is exponentially stable, all we need is to recall that \eqref{shiftend_consensus_error2} is exponentially stable and converges from any initial condition to the origin \cite{Mesbahi2010}. It can then be concluded that the cascade system is also locally exponentially stable \cite{Sundarapandian2004}.      
\end{proof}

Finally, we can establish the agent trajectories will then converge to a symmetric configuration with respect to the centroid of the virtual state $r(t)$, defined in \eqref{consensus_centroid}.

\begin{theorem}\label{thm.consensus_orbit}
Consider the cascade dynamics \eqref{orbit_symformation_consensus} and \eqref{consensus} and assume that $(\mathcal G,p^\star - {\mathbf r}^\star)$
is a $\tau(\Gamma)$-symmetric isostatic framework.  Then for all initial conditions sufficiently close to $p^\star$, and ${\mathbf r}^\star$ given in \eqref{consensus_centroid}, $p(t)$ exponentially converges to $p^\star$.


\end{theorem}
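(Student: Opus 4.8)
The plan is to read off the result from the cascade structure already set up in \eqref{shiftend_consensus_error1}--\eqref{shiftend_consensus_error2} and from Theorem~\ref{prop.cascadeLISS}, and then to translate exponential decay of the \emph{error} coordinates into exponential convergence of the \emph{physical} configuration $p(t)$ to $p^\star$. First I would recall that the consensus subsystem \eqref{consensus} (equivalently \eqref{shiftend_consensus_error2}) is globally exponentially stable whenever $\G$ is connected, so $r(t)\to\mathbf{r}^\star$ (i.e.\ $\hat r(t)\to 0$) exponentially, with $\mathbf{r}^\star$ as in \eqref{consensus_centroid}. For initial conditions sufficiently close to $p^\star$ — and with $r(0)$ chosen so that $\mathbf{r}^\star$ lies near the centroid of $p^\star$, which keeps $\hat r(0)$ and the shifted state $\hat c(0)$ near $p^\star-\mathbf{r}^\star$ — the error coordinates $\hat\sigma(0),\hat q(0)$ are small. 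Since $(\G,p^\star-\mathbf{r}^\star)$ is assumed $\tau(\Gamma)$-symmetric isostatic (with $\G$ satisfying Assumptions~\ref{ass.free} and \ref{vertexorbit_subgraph} and the $E(\Gamma_i)$ built from spanning trees), Theorem~\ref{prop.cascadeLISS} applies verbatim and the cascade \eqref{shiftend_consensus_error1}--\eqref{shiftend_consensus_error2} is locally exponentially stable, so $(\hat\sigma(t),\hat q(t),\hat r(t))\to 0$ exponentially.

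Next I would upgrade decay of the errors to convergence of $p(t)$. From \eqref{orbit_symformation_consensus}, $\dot p(t)$ is a continuous function of $(\hat\sigma(t),\hat q(t),\hat c(t))$ that vanishes whenever $\hat\sigma(t)=0$ and $\hat q(t)=0$; near the target the factor $\mathcal O(\G_0,\hat c_0(t))$ and the constant matrix $P\bar E(\Gamma)$ are bounded, so $\|\dot p(t)\|$ is bounded by a constant times $\|\hat\sigma(t)\|+\|\hat q(t)\|$, which decays exponentially. Hence $\dot p\in L_1$, and $p(t)$ converges to some limit $p^\infty$ at an exponential rate. It remains to identify $p^\infty=p^\star$.

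To pin down the limit, I would argue as follows. Because $\hat r(t)\to 0$, we have $\hat c(t)=\hat p(t)-\hat r(t)\to p^\infty-\mathbf{r}^\star$. Passing to the limit in the definitions of the error signals, $\bar E(\Gamma)^TP^T(p^\infty-\mathbf{r}^\star)=0$; by Assumption~\ref{vertexorbit_subgraph}, each $E(\Gamma_i)$ is built from a spanning tree of $\G(\Gamma_i)$, so its left-kernel (restricted to a vertex orbit) is exactly the $d$-dimensional "symmetric" subspace, which forces $p^\infty-\mathbf{r}^\star$ to be a $\tau(\Gamma)$-symmetric configuration in the sense of \eqref{eq:symfwk}. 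Likewise $\mathcal O(\G_0,p^\infty_0-\mathbf{r}^\star_0)(p^\infty_0-\mathbf{r}^\star_0)-\mathrm{\bf d}_0^2=0$ forces the representative edge-orbit lengths — hence, by symmetry, all edge lengths of $(\G,p^\infty-\mathbf{r}^\star)$ — to match those of $(\G,p^\star-\mathbf{r}^\star)$. Since $(\G,p^\star-\mathbf{r}^\star)$ is $\tau(\Gamma)$-symmetric isostatic, its orbit rigidity matrix has full row rank (Corollary~\ref{cor.orbitisostatic}) and maximal rank, so in a neighborhood of $p^\star-\mathbf{r}^\star$ the only $\tau(\Gamma)$-symmetric configurations with these edge lengths form exactly the orbit of $p^\star-\mathbf{r}^\star$ under $\tau(\Gamma)$-symmetric isometries. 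Because the trajectory stays in a small neighborhood of $p^\star-\mathbf{r}^\star$ (by the local exponential stability above), $p^\infty-\mathbf{r}^\star$ lies on this residual orbit; invoking the fact that the consensus term fixes the centroid at $\mathbf{r}^\star$ (removing translations) and a symmetry-reduction argument analogous to the one used for the isometry group in classic rigidity-based formation control (removing the remaining, symmetric, trivial motions), one concludes $p^\infty=p^\star$, and the convergence is exponential since all of $\hat\sigma,\hat q,\hat r$ decay exponentially.

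The main obstacle I anticipate is precisely this final identification step: separating $p^\infty$ from the residual $\tau(\Gamma)$-symmetric trivial motions of $(\G,p^\star-\mathbf{r}^\star)$ — for instance a small rotation about $\mathbf{r}^\star$ when the point group is rotational — since such motions keep the whole trajectory inside the prescribed neighborhood and therefore are not excluded by the LaSalle/localization argument alone. Closing this gap cleanly requires the extra conserved-quantity/quotient argument sketched above (the centroid handled by the consensus term, and the remaining symmetric trivial motions handled exactly as the Euclidean isometries are handled in the non-symmetric theory), which is where the bulk of the technical care in the full proof would go; everything else is a direct assembly of Theorem~\ref{prop.cascadeLISS}, the integrability of $\dot p$, and the rigidity-theoretic characterisation of the zero-error set.
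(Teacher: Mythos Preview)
Your approach is essentially the paper's: invoke Theorem~\ref{prop.cascadeLISS} to drive the error coordinates $(\hat\sigma,\hat q,\hat r)$ to zero exponentially, then argue (as in Theorem~\ref{thm_orbiterrordynamics}, applied to the shifted state $\hat p$) that the configuration converges to a fixed point, and identify that point with $p^\star$. The paper's own proof is far terser---it simply cites Theorem~\ref{prop.cascadeLISS}, then asserts ``$\hat p(t)$ locally and exponentially converges to a $\tau(\Gamma)$-symmetric isostatic framework, $\hat p(t)\to p^\star-\mathbf r^\star$''---without the $L_1$ integrability argument or the explicit characterisation of the zero-error set that you spell out.

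The obstacle you flag in your last paragraph is well spotted and, notably, is \emph{not} resolved in the paper's proof either: the paper passes directly from ``the errors vanish'' to ``$\hat p(t)\to p^\star-\mathbf r^\star$'' without separating the limit from the residual $\tau(\Gamma)$-symmetric trivial motions (e.g.\ rotations about $\mathbf r^\star$ for rotational point groups). So your proposal is at least as complete as the paper's own argument; the identification gap you identify is real but is treated by the paper as part of the ``sufficiently close to $p^\star$'' localization rather than closed by an explicit conserved-quantity argument.
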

\begin{proof}
    From Theorem \ref{prop.cascadeLISS} we have that $\hat \sigma(t)$ and $\hat q(t)$ converge to the origin.  The remainder of the proof follows the same argument as Theorem \ref{thm_orbiterrordynamics} but on the shifted state $\hat p(t)$.  Therefore, since $\hat p(t)$ locally and exponentially converges to a $\tau(\Gamma)$-symmetric isostatic framework, $\hat p(t) \to p^\star - {\mathbf r}^\star$, it follows that $p(t) \to p^\star + \mathbf{r}^\star$.\end{proof}
    
    


This result assumes that $p(0)-{\mathbf r}^\star$ is sufficiently close to a $\tau(\Gamma)$-symmetric configuration.  This may be problematic since in general the vector $\mathbf r^\star$ may not be globally known to all agents.  On the other hand, it is only as restrictive as Theorem \ref{thm_orbiterrordynamics} and so does not introduce any new conservativeness.   

\begin{continueexample}{ex_thm3}
    We return to the same setup as Example \ref{ex_thm3} and now implement the centroid consensus version of the dynamics \eqref{orbit_symformation_consensus}.  The resulting trajectories can be seen in Figure \ref{fig:ex_orbit2}. Here we initialize the virtual state $r(t)$ to be the initial condition of the agents, $r(0)=p(0)$. The formation converges to the correct symmetry-forced configuration, but now with respect to the centroid of the initial conditions.
\end{continueexample}

\section{Concluding Remarks}

This work presented a formation control strategy for controlling a network of agents to a formation characterized by its symmetry properties.  Leveraging recent results from symmetry-forced rigidity theory, a gradient control strategy was derived with two main components based on a chosen symmetry group of the formation: one maintains distances between edges in the edge orbits, while the other forces agents within the same vertex orbit to a symmetric position.  The stability results turn out to be related to properties of the so-called orbit rigidity matrix which is a central object in symmetry-force rigidity theory.  While the results in this work focused only on symmetries that are free, extending these ideas to the non-free case should be straightforward and is the topic of future research.  
\bibliographystyle{IEEEtran}
\bibliography{main}

\begin{thebibliography}{10}
\providecommand{\url}[1]{#1}
\csname url@samestyle\endcsname
\providecommand{\newblock}{\relax}
\providecommand{\bibinfo}[2]{#2}
\providecommand{\BIBentrySTDinterwordspacing}{\spaceskip=0pt\relax}
\providecommand{\BIBentryALTinterwordstretchfactor}{4}
\providecommand{\BIBentryALTinterwordspacing}{\spaceskip=\fontdimen2\font plus
\BIBentryALTinterwordstretchfactor\fontdimen3\font minus
  \fontdimen4\font\relax}
\providecommand{\BIBforeignlanguage}[2]{{%
\expandafter\ifx\csname l@#1\endcsname\relax
\typeout{** WARNING: IEEEtran.bst: No hyphenation pattern has been}%
\typeout{** loaded for the language `#1'. Using the pattern for}%
\typeout{** the default language instead.}%
\else
\language=\csname l@#1\endcsname
\fi
#2}}
\providecommand{\BIBdecl}{\relax}
\BIBdecl

\bibitem{Igarashi:TAC2009}
Y.~Igarashi, T.~Hatanaka, M.~Fujita, and M.~W. Spong, ``Passivity-based
  attitude synchronization in $se(3)$,'' \emph{IEEE Transactions on Control
  Systems Technology}, vol.~17, no.~5, pp. 1119--1134, 2009.

\bibitem{Rosen2000}
P.~A. Rosen, S.~Hensley, I.~R. Joughin, F.~K. Li, S.~N. Madsen, E.~Rodriguez,
  and R.~M. Goldstein, ``Synthetic aperture radar interferometry,''
  \emph{Proceedings of the IEEE}, vol.~88, no.~3, pp. 333--382, 2000.

\bibitem{Dai2018}
S.-L. Dai, S.~He, H.~Lin, and C.~Wang, ``Platoon formation control with
  prescribed performance guarantees for usvs,'' \emph{IEEE Transactions on
  Industrial Electronics}, vol.~65, no.~5, pp. 4237--4246, 2018.

\bibitem{AR}
L.~Asimow and B.~Roth, ``The rigidity of graphs,'' \emph{Trans. Amer. Math.
  Soc.}, vol. 245, pp. 279--289, 1978.

\bibitem{Jackson2007NotesOT}
B.~Jackson, ``Notes on the rigidity of graphs,'' in \emph{Levico Conference
  Notes}, 2007.

\bibitem{Krick2009}
L.~Krick, M.~E. Broucke, and B.~A. Francis, ``Stabilisation of infinitesimally
  rigid formations of multi-robot networks,'' \emph{International Journal of
  Control}, vol.~82, no.~3, pp. 423--439, 2009.

\bibitem{formationbook2}
M.~De~Queiroz, X.~Cai, and M.~Feemster, \emph{Formation Control of Multi-Agent
  Systems: A Graph Rigidity Approach}.\hskip 1em plus 0.5em minus 0.4em\relax
  Wiley, 2019.

\bibitem{formationbook}
H.-S. Ahn, \emph{Formation control}, ser. Studies in Systems, Decision and
  Control.\hskip 1em plus 0.5em minus 0.4em\relax Springer, Cham, [2020]
  \copyright 2020, vol. 205.

\bibitem{Zhao2019}
S.~Zhao and D.~Zelazo, ``Bearing rigidity theory and its applications for
  control and estimation of network systems: Life beyond distance rigidity,''
  \emph{IEEE Control Systems Magazine}, vol.~39, no.~2, pp. 66--83, 2019.

\bibitem{Sun2016}
Z.~Sun, S.~Mou, B.~D. Anderson, and M.~Cao, ``Exponential stability for
  formation control systems with generalized controllers: A unified approach,''
  \emph{Systems \& Control Letters}, vol.~93, pp. 50--57, Jul. 2016.

\bibitem{Dimarogonas2008}
D.~V. Dimarogonas and K.~H. Johansson, ``{On the stability of distance-based
  formation control},'' in \emph{47th IEEE Conference on Decision and
  Control}.\hskip 1em plus 0.5em minus 0.4em\relax IEEE, 2008, pp. 1200--1205.

\bibitem{Dorfler2010}
F.~D\"{o}rfler and B.~Francis, ``Geometric analysis of the formation problem
  for autonomous robots,'' \emph{IEEE Transactions on Automatic Control},
  vol.~55, no.~10, pp. 2379--2384, 2010.

\bibitem{Zelazo_IFAC2023}
D.~Zelazo, B.~Schulze, and S.~Tanigawa, ``Stabilization of symmetric
  formations,'' in \emph{IFAC World Congress}, 2023, pp. 11\,316--11\,321.

\bibitem{bernd2017}
B.~Schulze and W.~Whiteley, ``Rigidity and scene analysis,'' in \emph{Handbook
  of Discrete and Computational Geometry}, 3rd~ed., C.~Toth, J.~O'Rourke, and
  J.~Goodman, Eds.\hskip 1em plus 0.5em minus 0.4em\relax Chapman and Hall/CRC
  Press, 2017, ch.~61.

\bibitem{ConnellyGuest}
R.~Connelly and S.~D. Guest, \emph{Frameworks, tensegrities, and
  symmetry}.\hskip 1em plus 0.5em minus 0.4em\relax Cambridge University Press,
  Cambridge, 2022.

\bibitem{Wlong}
W.~Whiteley, ``Some matroids from discrete applied geometry,'' in \emph{Matroid
  theory ({S}eattle, {WA}, 1995)}, ser. Contemp. Math.\hskip 1em plus 0.5em
  minus 0.4em\relax Amer. Math. Soc., Providence, RI, 1996, vol. 197, pp.
  171--311.

\bibitem{gei}
H.~Pollaczek-Geiringer, ``Über die {G}liederung ebener {F}achwerke,''
  \emph{ZAMM-J. Appl. Math. Mech. Angew. Math. Mech.}, vol.~7, pp. 58--72,
  1927.

\bibitem{laman}
G.~Laman, ``On graphs and rigidity of plane skeletal structures,'' \emph{J.
  Engrg. Math.}, vol.~4, pp. 331--340, 1970.

\bibitem{Dummit2004}
D.~Dummit and R.~Foote, \emph{Abstract Algebra}.\hskip 1em plus 0.5em minus
  0.4em\relax Wiley, 2004.

\bibitem{Bernd2017sym}
B.~Schulze and W.~Whiteley, ``Rigidity of symmetric frameworks,'' in
  \emph{Handbook of Discrete and Computational Geometry}, 3rd~ed., C.~Toth,
  J.~O'Rourke, and J.~Goodman, Eds.\hskip 1em plus 0.5em minus 0.4em\relax
  Chapman and Hall/CRC Press, 2017, ch.~62.

\bibitem{alt94}
S.~L. Altmann and P.~Herzig, \emph{Point-Group Theory Tables}.\hskip 1em plus
  0.5em minus 0.4em\relax Oxford: Clarendon Press, 1994.

\bibitem{atk70}
P.~W. Atkins, M.~S. Child, and C.~S.~G. Phillips, \emph{Tables for Group
  Theory}.\hskip 1em plus 0.5em minus 0.4em\relax Oxford University Press,
  1970.

\bibitem{kangwai2000}
R.~D. Kangwai and S.~D. Guest, ``Symmetry-adapted equilibrium matrices,''
  \emph{International Journal of Solids and Structures}, vol.~37, pp.
  1525--1548, 2000.

\bibitem{op}
J.~C. Owen and S.~C. Power, ``Frameworks symmetry and rigidity,''
  \emph{Internat. J. Comput. Geom. Appl.}, vol.~20, 2010.

\bibitem{bernd2010}
B.~Schulze, ``Block-diagonalized rigidity matrices of symmetric frameworks and
  applications,'' \emph{Beiträge zur Algebra und Geometrie / Contributions to
  Algebra and Geometry}, vol.~51, no.~2, pp. 427--466, 2010.

\bibitem{SchTan15}
B.~Schulze and S.~Tanigawa, ``Infinitesimal rigidity of symmetric bar-joint
  frameworks,'' \emph{SIAM J. Discrete Math.}, vol.~29, no.~3, pp. 1259--1286,
  2015.

\bibitem{ike}
R.~Ikeshita, ``Infinitesimal rigidity of symmetric frameworks,'' 2015,
  master’s Thesis, Department of Mathematical Informatics, Graduate School of
  Information Science and Technology, The University of Tokyo.

\bibitem{sw2011}
B.~Schulze and W.~J. Whiteley, ``The orbit rigidity matrix of a symmetric
  framework,'' \emph{Discrete $\&$ Computational Geometry}, vol.~46, pp.
  561--598, 2011.

\bibitem{sch10}
B.~Schulze, ``Symmetry as a sufficient condition for a finite flex,''
  \emph{SIAM Journal on Discrete Mathematics}, vol.~24, pp. 1291--1312, 2010.

\bibitem{jtk}
T.~Jord\'{a}n, V.~E. Kaszanitzky, and S.~Tanigawa, ``Gain-sparsity and
  symmetry-forced rigidity in the plane,'' \emph{Discrete Comput. Geom.},
  vol.~55, no.~2, pp. 314--372, 2016.

\bibitem{Mesbahi2010}
M.~Mesbahi and M.~Egerstedt, \emph{Graph Theoretic Methods in Multiagent
  Networks}, ser. Princeton Series in Applied Mathematics.\hskip 1em plus 0.5em
  minus 0.4em\relax Princeton University Press, 2010.

\bibitem{Sontag1996}
E.~Sontag and Y.~Wang, ``New characterizations of input-to-state stability,''
  \emph{IEEE Transactions on Automatic Control}, vol.~41, no.~9, pp.
  1283--1294, 1996.

\bibitem{Sundarapandian2004}
V.~Sundarapandian, ``Local and global asymptotic stability of nonlinear cascade
  interconnected systems,'' \emph{Mathematical and Computer Modelling},
  vol.~40, no.~1, pp. 227--232, 2004.

\end{thebibliography}
\vspace{-1.5cm}
 \begin{IEEEbiography}[{\includegraphics[width=1in,height=1.25in,clip,keepaspectratio]{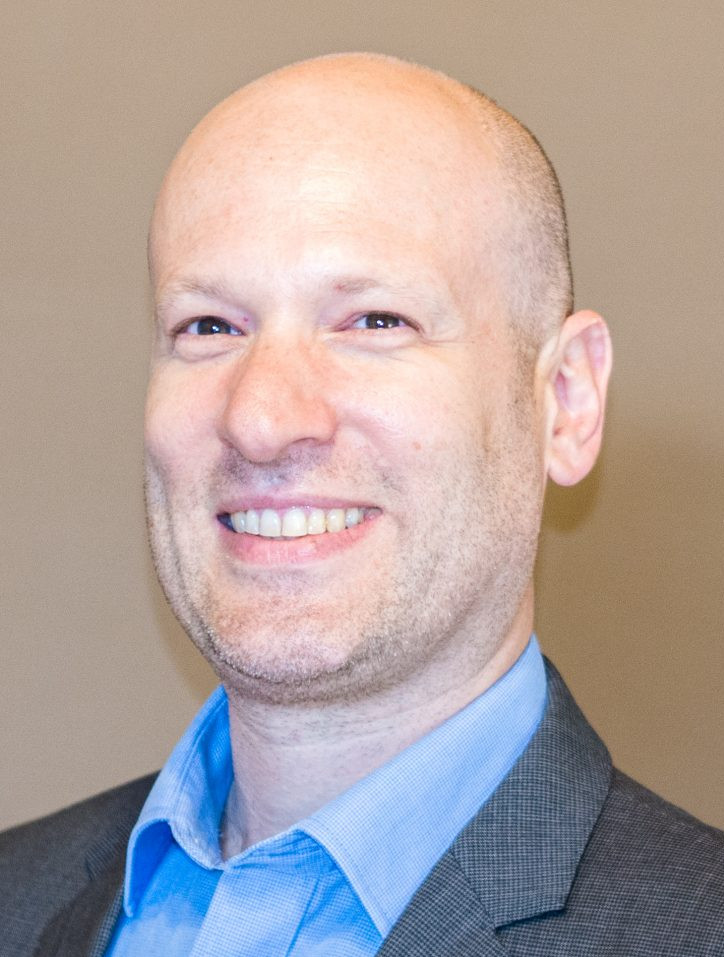}}]
 {\bf Daniel~Zelazo} (Senior Member, IEEE) received the B.Sc. and M.Eng. degrees in electrical engineering and computer science from the Massachusetts Institute of Technology, Cambridge, MA, USA, in 1999 and 2001, respectively, and the Ph.D. degree in aeronautics and astronautics from the University of Washington, Seattle, WA, USA, in 2009. From 2010 to 2012, he was a Postdoctoral Research Associate and Lecturer with the Institute for Systems Theory and Automatic Control, University of Stuttgart, Germany. He is an Associate Professor of aerospace engineering with the Technion-Israel Institute of Technology, Haifa, Israel. His research interests include topics related to multiagent systems.
 \end{IEEEbiography}
 \vspace{-1.5cm}
 \begin{IEEEbiography}[{\includegraphics[width=1in,height=1.25in,clip,keepaspectratio]{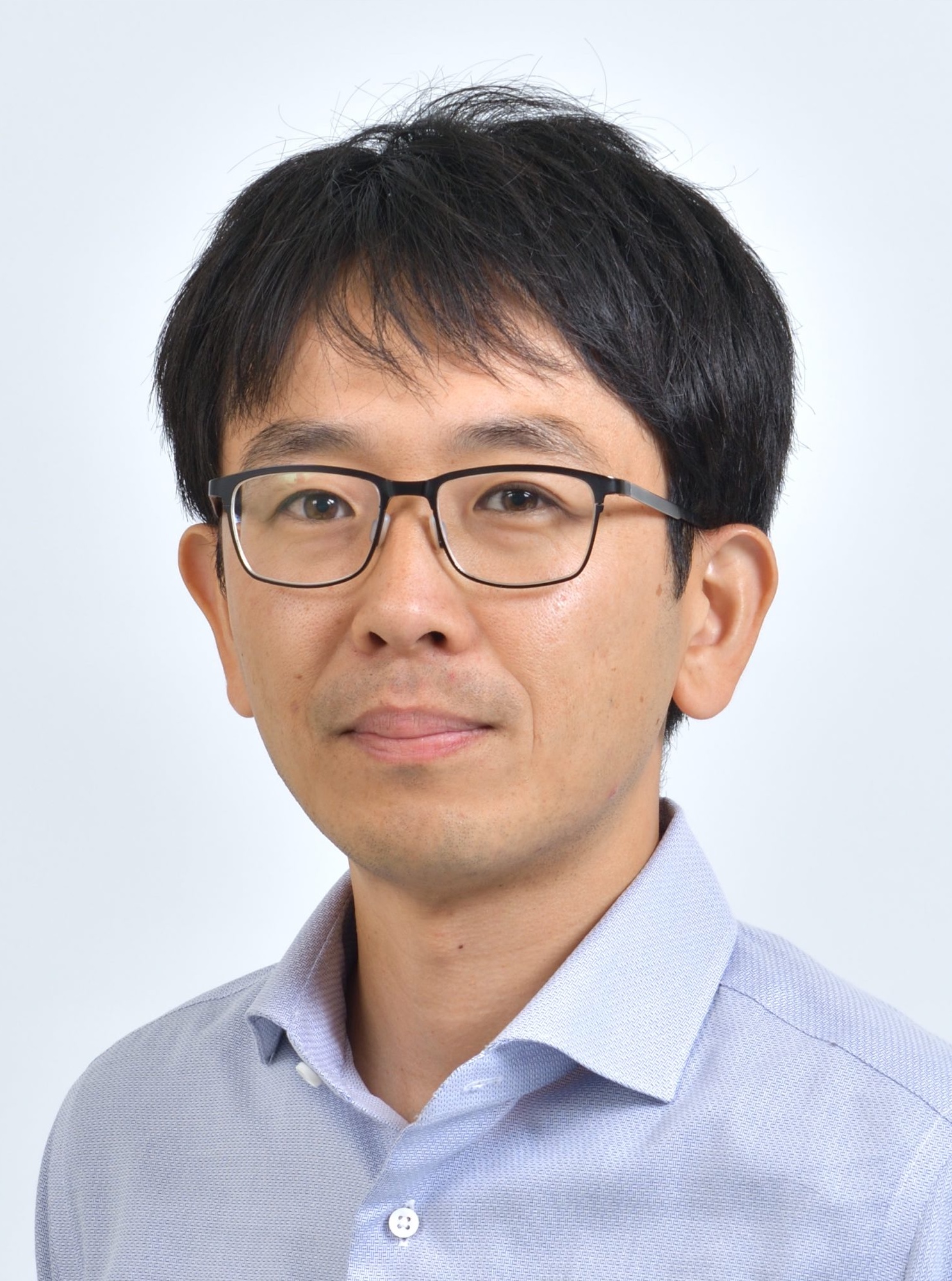}}]
 {\bf Shin-Ichi Tanigawa} received the Master of Engineering and Doctor of Engineering degrees from Kyoto University, Japan, in 2007 and 2010, respectively. He was a JSPS postdoctoral fellow at Kyoto University, Japan, from 2010 to 2011, and at CWI, Netherlands, from 2015 to 2017. From 2011 to 2017, he was an Assistant Professor in Mathematical Science at Kyoto University, Japan. He is currently an Associate Professor of Mathematical Informatics at the University of Tokyo, Japan. His research interests include discrete and computational geometry, graph theory, and combinatorial optimization.
 \end{IEEEbiography}
 \vspace{-1.5cm}
 \begin{IEEEbiography}[{\includegraphics[width=1in,height=1.25in,clip,keepaspectratio]{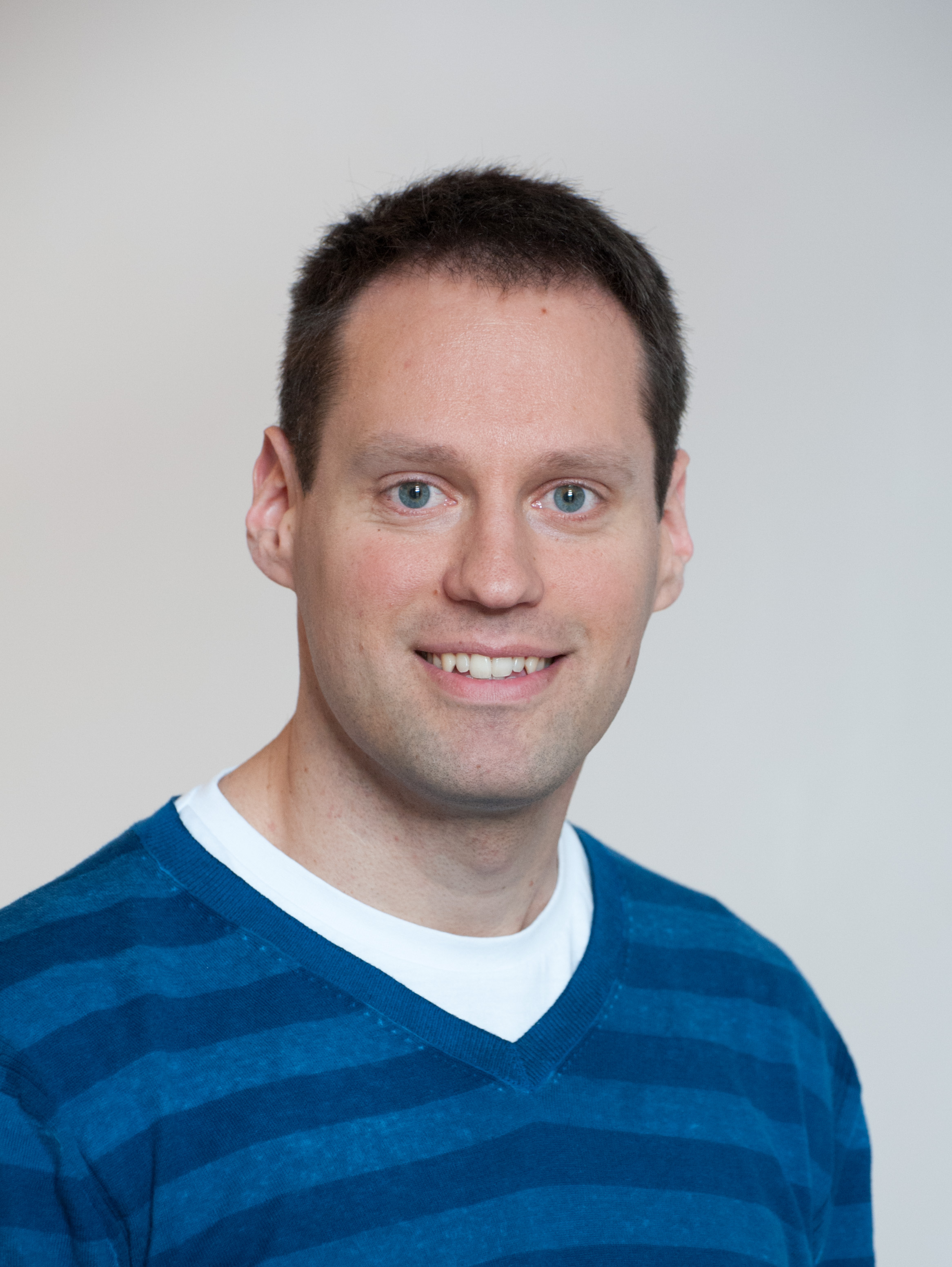}}]
 {\bf Bernd Schulze}  is a Reader in Mathematics at Lancaster University, UK. Before arriving at Lancaster in 2012 he held postdoctoral positions at the TU Berlin, Germany, from 2009 to 2011 and at the Fields Institute in Toronto, Canada, in 2011. Prior to that he studied mathematics and computer science at the FU Berlin, Germany, and at Western Michigan University, USA, and he received his Ph.D. in mathematics from York University, Canada, in 2009. His main research interests lie in applied discrete geometry, graph theory, combinatorial optimization, symmetry, and algebraic methods in discrete mathematics. 
 \end{IEEEbiography}

\end{document}